\theoremstyle{plain}
\newtheorem{thm}{Theorem}[section]
\newtheorem{cor}[thm]{Corollary}
\newtheorem{pro}[thm]{Proposition}
\newtheorem{lem}[thm]{Lemma}
\newtheorem{defin}[thm]{Definition}
\newtheorem*{ex*}{Example}
\numberwithin{equation}{section}
\newcommand{\Z}{\mathbb{Z}_{2}^{d}}
\newcommand{\N}{\mathbb{N}}
\newcommand{\R}{\mathbb{R}_{+}^{d}}
\newcommand{\RR}{\mathbb{R}^{d}}
\newcommand{\La}{\mathcal{L}_{\alpha}}
\def\a{\alpha}
\def\z{\zeta}
\def\ll{\lambda}
\def\eps{\varepsilon}
\def\b{(\z,q_{\pm})}
\DeclareMathOperator{\e}{{\mathbb{E}\textrm{xp}}}
\DeclareMathOperator{\ee}{{\mathbb{E}\textrm{\emph{xp}}}}
\DeclareMathOperator{\domain}{Dom}
\DeclareMathOperator{\spec}{spec}
\begin{document}

\subjclass[2000]{42C10 (primary), 42B15, 42B20 (secondary)}
\keywords{multiplier, Dunkl harmonic oscillator, generalized Hermite expansions, Laguerre expansions of convolution type, Calder\'on-Zygmund operator, $A_p$ weight}


\title[Multipliers in Dunkl and Laguerre settings]{Multipliers of Laplace transform type in certain Dunkl and Laguerre settings}
\author[]{Tomasz Szarek}
\address{Tomasz Szarek,     \newline
      ul{.} W{.} Rutkiewicz 29\slash 43,
      PL-50--571 Wroc\l{}aw, Poland        \vspace{10pt}}

\email{szarektomaszz@gmail.com}

\begin {abstract}
We investigate Laplace type and Laplace-Stieltjes type multipliers in the $d$-dimen\-sional setting of the Dunkl harmonic oscillator with the associated group of reflections isomorphic to $\Z$ and in the related context of Laguerre function expansions of convolution type. We use Calder\'on-Zygmund theory to prove that these multiplier operators are bounded on weighted $L^p$, $1<p<\infty$, and from $L^1$ to weak $L^1$.
\end {abstract}

\maketitle

\section{Introduction}\label{intro}
\setcounter{equation}{0}

In \cite{Sz1} the author defined and investigated square functions related to the Dunkl harmonic oscillator $L_{\a}$ and the related group of reflections isomorphic to $\Z$. This paper continues the study of $L^p$ mapping properties of fundamental harmonic analysis operators associated with $L_{\a}$. We consider Laplace type and Laplace-Stieltjes type multiplier operators, see Section \ref{pre} for the definitions. The most typical examples of such operators are the imaginary powers $L_{\a}^{-i\gamma}$, $\gamma\in\mathbb{R}$, and the fractional integral operators $L_{\a}^{-\delta}$, $\delta>0$, as pointed out in \cite[Section 2]{W}. Thus our main result, Theorem \ref{main} below, may be regarded as a continuation of the investigations of Nowak and Stempak \cite{NS2,NS4}, where these operators were studied in the context of $L_{\a}$. A trivial choice of the multiplicity function reduces the Dunkl setting to the situation of classical Hermite function expansions. Thus, in particular, our considerations provide results in the Hermite setting, where the Laplace type multiplier operators were implicitly analyzed by Stempak and Torrea, see the comment in \cite[p.\,46]{StTo2}. Moreover, our Dunkl situation reduces to the setting of Laguerre function expansions of convolution type after restricting to reflection invariant functions. Consequently, we obtain also results in the Laguerre context. 

Multipliers related to numerous classic kinds of orthogonal expansions were widely investigated. In particular, Stempak and Trebels \cite{StTr} studied multipliers of non-Laplace type in a one-dimensional Laguerre setting, which is deeply connected with our Dunkl situation (see Remark \ref{Lag} below). Some earlier results concerning multiplier operators of Laplace type for discrete and continuous orthogonal expansions can be found in \cite{BCN,BMR,DDD,M,Sa2,W}, among others.
A general treatment of Laplace type multipliers in a context of symmetric diffusion semigroups can be found in Stein's monograph \cite{Ste}.

We refer the reader to the survey article by R\"osler \cite{R} for basic facts concerning Dunkl's theory. A precise description of the Dunkl framework for the particular group of reflections $G$ isomorphic to $\Z$ can be found for instance in \cite[Section 3]{NS3}. Here we invoke only the most relevant facts. We shall work on the space $\RR$, $d\geq 1$, equipped with the measure
\begin{align*}
dw_{\a}(x)=\prod_{j=1}^{d}|x_{j}|^{2\a_{j}+1}\,dx,\qquad x=(x_1,\ldots,x_d)\in\RR,
\end{align*}
and with the Euclidean norm $|\cdot|$. The multi-index $\a=(\a_1,\ldots,\a_d)\in [-1\slash 2,\infty)^d$ represents the multiplicity function. Consider the reflection group $G\simeq\Z$ generated by $\sigma_{j}$, $j=1,\ldots,d$,
$$
\sigma_{j}(x_{1},\ldots,x_{j},\ldots,x_{d})=(x_{1},\ldots,-x_{j},\ldots,x_{d}).
$$
Clearly, the reflection $\sigma_{j}$ is in the hyperplane orthogonal to $e_j$, the $j$th coordinate vector. Notice that the measure $w_{\a}$ is $G$-invariant. The Dunkl differential-difference operators $T_{j}^{\a}$, $j=1,\ldots,d$, are given by
\begin{align*}
T_{j}^{\a}f(x)=\partial_{x_{j}}f(x)+(\a_{j}+1\slash 2)\frac{f(x)-f(\sigma_{j}x)}{x_{j}},\qquad f\in C^{1}(\mathbb{R}^{d}),\qquad j=1,\ldots,d.
\end{align*}
The Dunkl Laplacian,
\begin{align*}
\Delta_{\a}f(x)=\sum_{j=1}^{d}\big(T_{j}^{\a}\big)^{2}f(x)
=
\sum_{j=1}^{d}\bigg(\frac{\partial^{2}f}{\partial x_j^2}(x)+\frac{2\a_j+1}{x_j}\frac{\partial f}{\partial x_j}(x)-(\a_j+1\slash 2)\frac{f(x)-f(\sigma_j x)}{x_j^2}\bigg),
\end{align*} 
is formally self-adjoint in $L^2(\RR,dw_{\a})$. The Dunkl harmonic oscillator is defined as
\begin{align*}
L_{\a}=-\Delta_{\a}+|x|^{2}.
\end{align*}
This operator will play in our investigations a similar role to that of the Euclidean Laplacian in the classical harmonic analysis. Note that for $\a=(-1\slash 2,\ldots,-1\slash 2)$, $L_\a$ becomes the classic harmonic oscillator $-\Delta+|x|^2$. We shall consider a self-adjoint extension $\mathcal{L}_\a$ of $L_\a$, whose spectral decomposition is discrete and given by the generalized Hermite functions $h_n^\a$, see Section \ref{pre} for details. 

The main objects of our study are spectral multipliers associated with $\La$. More precisely, we investigate two kinds of such operators, see Definition \ref{def} below. The first one is a multiplier of Laplace type, which originates in Stein's monograph \cite{Ste}. The second one is a multiplier of Laplace-Stieltjes type, which was considered recently by Wr\'obel \cite{W} in the context of Laguerre function expansions of Hermite type, and its definition has roots in the work of De N\'apoli, Drelichman and Dur\'an \cite{DDD}.
Our main result, Theorem \ref{main}, says that the multiplier operators in question are bounded on weighted $L^p(dw_\a)$, $1<p<\infty$, and satisfy weighted weak type (1,1) inequality, for a large class of weights.
We note that the unweighted $L^p$-boundedness, $1<p<\infty$, of the Laplace type multiplier operators follows also from the refinement of Stein's general Littlewood-Paley theory for semigroups (see \cite[Corollary 3, p.\,121]{Ste}) due to Coifman, Rochberg and Weiss \cite{CRW}, see also \cite[Theorem 2]{Me}.

In the proof of Theorem \ref{main} we exploit the arguments from \cite{NS2} that allow us to reduce the analysis to suitably defined Laguerre-type operators related to the smaller measure space $(\R,dw_{\a}^{+})$; here $\R=(0,\infty)^d$ and $w_\a^+$ is the restriction of $w_\a$ to $\R$. Then we apply the theory of Calder\'on-Zygmund operators with the underlying space of homogeneous type $(\R,dw_{\a}^{+},|\cdot|)$. An essential technical difficulty connected with this approach is to show the relevant kernel estimates. 
Here we employ a convenient technique having roots in Sasso's paper \cite{Sa1} and developed by Nowak and Stempak \cite{NS1,NS3,NS2} and the author \cite{Sz,Sz1}. It is remarkable that similar methods have been established recently in the contexts of Jacobi expansions \cite{NoSj} and Bessel operators \cite{BCN}.

The paper is organized as follows. Section \ref{pre} contains the setup, definitions of the investigated multipliers and statements of the main results. Also, suitable Laguerre-type operators, related to the restricted space $(\R,dw_{\a}^{+})$, are defined and the proof of the main theorem is reduced to showing that these auxiliary operators are Calder\'on-Zygmund operators. Furthermore, we verify that the Laguerre-type operators are associated, in the Calder\'on-Zygmund theory sense, with suitable integral kernels. The section is concluded by comments on the relation between the Leguerre-type operators and the Laguerre setting studied in \cite{NS1}.    
Finally, Section \ref{ker} is devoted to the proofs of the relevant kernel estimates.

Throughout the paper we use a standard notation with essentially all symbols referring to the spaces $(\RR,dw_{\a},|\cdot|)$ or $(\R,dw_{\a}^{+},|\cdot|)$. Thus $\Delta$ and $\nabla$ denote the Euclidean Laplacian and gradient, respectively. Further, $L^{p}(\RR,Wdw_{\a})$ stands for the weighted $L^{p}(\RR,dw_{\a})$ space, $W$ being a nonnegative weight on $\RR$; we write simply $L^{p}(dw_{\a})$ if $W\equiv 1$. 
By $\langle f,g\rangle_{dw_{\a}}$ we mean $\int_{\RR}f(x)\overline{g(x)}\,dw_{\a}(x)$ whenever the integral makes sense. In a similar way we define $L^{p}(\R,Udw_{\a}^{+})$ and $\langle f,g\rangle_{dw_{\a}^+}$. For  $1\leq p<\infty$ we denote by $A_{p}^{\a,+}$ the Muckenhoupt class of $A_{p}$ weights associated to the space $(\R,dw_{\a}^+,|\cdot|)$. 

While writing estimates we will frequently use the notation $X\lesssim Y$ to indicate that $X\leq CY$ with a positive constant $C$ independent of significant quantities. We will write $X\simeq Y$ when both $X\lesssim Y$ and $Y\lesssim X$.

\textbf{Acknowledgments.} The author would like to thank Professor Krzysztof Stempak for suggesting the topic, Dr.\ Adam Nowak for valuable comments during preparation of the paper, and B\l a\.zej Wr\'obel for sharing the manuscript \cite{W} and discussions concerning the topic.

\section{Preliminaries and main results}\label{pre}
\setcounter{equation}{0}

Let $k=(k_1,\ldots,k_d) \in \N^d$, $\N=\{0,1,\dots\}$,  and
$\a = (\a _1, \ldots , \a _d) \in [-1\slash 2,\infty)^{d}$
be multi-indices. The generalized Hermite functions in $\RR$ are defined as tensor products
$$
h_{k}^{\alpha}(x) = h _{k_1}^{\alpha _1}(x_1) \cdot \ldots \cdot
h_{k_d}^{\alpha _d}(x_d), \qquad x = (x_1, \ldots ,x_d)\in \RR,
$$
where $h_{k_i}^{\alpha _i}$ are the one-dimensional generalized Hermite functions
\begin{align*}
h_{2k_i}^{\alpha_i}(x_i)&=d_{2k_i,\alpha_i}\, e^{-x_i^2/2}L_{k_i}^{\alpha_i}(x_i^2),\\
h_{2k_i+1}^{\alpha _i}(x_i)&=d_{2k_i+1,\alpha_i}\, e^{-x_i^2/2}x_iL_{k_i}^{\alpha_i+1}(x_i^2);
\end{align*}
here $L_{k_i}^{\alpha_i}$ is the Laguerre polynomial of degree $k_{i}$ and order $\a_i$, and $d_{n,\a_{i}}$, $n\in\N$, are suitable normalizing constants, see \cite[p.\,544]{NS3} or \cite[p.\,4]{NS2}. The system $\{h_{k}^{\a} : k\in\N^{d} \}$ is an orthonormal basis in $L^{2}(\RR,dw_{\a})$ consisting of eigenfunctions of $L_{\a}$,
\begin{align*}
L_{\a}h_{k}^{\a}=\ll_{|k|}^{\a} h_{k}^{\a},\qquad \ll_n^\a=2n+2|\a|+2d,\qquad n\in\N;
\end{align*}
here $|k|=k_1+\ldots+k_d$ is the length of $k$; similarly $|\a|=\a_1 + \ldots + \a_d$. The operator
\begin{align*}
\mathcal{L}_\alpha f=\sum_{n=0}^\infty\ll_{n}^{\a}
\sum_{|k|=n}\langle f,h_k^{\alpha}
  \rangle_{dw_{\alpha}}h_k^{\alpha},
\end{align*}
defined on the domain $\domain(\mathcal{L}_{\a})$ consisting of all functions $f\in L^2(\RR,dw_{\a})$ for which the defining series converges in $L^2(\RR,dw_{\a})$, is a self-adjoint extension of $L_\a$ considered on $C_c^\infty(\RR)$.

The heat semigroup associated with $\mathcal{L}_{\a}$, defined by the spectral theorem as
$$
T_t^\a f = \exp(-t\mathcal{L}_{\a})f
=
\sum_{n=0}^{\infty} e^{-t\ll_{n}^{\a}}\sum_{|k|=n}\langle f,h_k^{\alpha}
  \rangle_{dw_{\alpha}}h_k^{\alpha}, 
\qquad f\in L^2(\RR,dw_{\alpha}),
$$  
is a strongly continuous semigroup of contractions on $L^2(\RR,dw_{\a})$. We have the integral representation
\begin{equation*}
  T_t^\alpha f(x)=\int_{\RR} G_t^\alpha(x,y)f(y)\,dw_{\alpha}(y),
  \qquad x\in \RR, \quad t>0,
\end{equation*}
where the Dunkl heat kernel is given by
\begin{equation}\label{gie}
G^\alpha_t(x,y)=\sum_{n=0}^\infty e^{-t\ll_{n}^{\a}} \sum_{|k|=n}
h_k^\alpha(x)h_k^\alpha(y).
\end{equation}
This oscillating series can be summed, see for instance \cite[p.\,544]{NS3} or \cite[p.\,5]{NS2}, and the resulting formula is
\begin{align*}
G^{\a}_{t}(x,y)=\sum_{\eps\in\Z}G^{\a,\eps}_{t}(x,y),
\end{align*}
with the component kernels
$$
G^{\a,\eps}_{t}(x,y)=(2\sinh 2t)^{-d}\exp\Big({-\frac{1}{2} \coth(2t)\big(|x|^{2}+|y|^{2}\big)}\Big)
\prod^{d}_{i=1} (x_i y_i)^{\eps_{i}} \frac{I_{\alpha_i+\eps_{i}}\left(\frac{x_i y_i}{\sinh 2t}\right)}{(x_i y_i)^{\a_{i}+\eps_{i}}},
$$
where $I_{\nu}$ denotes the modified Bessel function of the first kind and order $\nu$. Here we consider the functions $z\mapsto z^{\nu}$ and the Bessel function as analytic functions on $\mathbb{C}$ cut along the half-axis $\{ix : x\leq 0\}$, see the references given above. Note that each $G_{t}^{\a,\eps}(x,y)$, $\eps \in \Z$, is also expressed by the series \eqref{gie}, but with the summation in $k$ restricted to the set
$$
\mathcal{N}_{\eps} = \big\{ k \in \N^{d} : k_i \; \textrm{is even if} \; \eps_i=0,
	\; k_i \; \textrm{is odd if}\; \eps_i=1,\; i=1,\ldots,d\big\}.
$$

\begin{defin}\label{def}
Following E{.} M{.} Stein \cite[p.\,58,\,p.\,121]{Ste} we say that $m$ is a multiplier of Laplace (transform) type associated with $\La$ if $m$ has the form
\begin{align}\label{molt}
m(z)=m_{\eta}(z)=z\int_0^\infty e^{-tz}\eta(t)\,dt, \qquad z \geq \ll_0^{\a},
\end{align} 
where $\eta$ is a bounded measurable function on $(0,\infty)$. We also consider multipliers of Laplace-Stieltjes type associated with $\La$ (see \cite[Section 2]{W} and comments therein) having the form
\begin{align}\label{momt}
m(z)=m_{\mu}(z)=\int_0^\infty e^{-tz}\,d\mu(t), \qquad z \geq \ll_0^{\a},
\end{align} 
where $\mu$ is a signed or complex Borel measure on $(0,\infty)$ with total variation $|\mu|$ satisfying
\begin{align}\label{meas}
\int_0^\infty e^{-t\ll_0^{\a}}\,d|\mu|(t)<\infty.
\end{align} 
\end{defin}

We are now prepared to define the main objects of our study. Given a Laplace type or Laplace-Stieltjes type multiplier $m$, we consider the multiplier operators
\begin{align}\label{mla}
m(\La) f=
\sum_{n=0}^{\infty} m(\ll_n^\a)\sum_{|k|=n}\langle f,h_k^{\alpha}
  \rangle_{dw_{\alpha}}h_k^{\alpha}, 
\qquad f\in L^2(\RR,dw_{\alpha}).
\end{align} 
It is not hard to see that $m$ defined either by \eqref{molt} or by \eqref{momt} is a bounded function on the interval $[\ll_{0}^{\a},\infty)$. Since $\spec(\La)\subset [\ll_{0}^{\a},\infty)$, the operator $m(\La)$ is a well-defined bounded operator on $L^2(\RR,dw_\a)$.

To state and prove our main result it is convenient to introduce the following terminology. Given $\eps\in\Z$, we say that a function $f\colon\RR\to\mathbb{C}$ is $\eps$-symmetric if for each $j=1,\ldots,d$, $f$ is either even or odd with respect to the $j$th coordinate according to whether $\eps_{j}=0$ or $\eps_{j}=1$, respectively. If $f$ is $(0,\ldots,0)$-symmetric, then we simply say that $f$ is symmetric. Further, if there exists $\eps\in\Z$ such that $f$ is $\eps$-symmetric, then we denote by $f^{+}$ the restriction of $f$ to $\R$. This convention pertains also to $\eps$-symmetric weights defined on $\RR$.

The main result of the paper reads as follows.

\begin{thm}\label{main}
Assume that $\a\in[-1\slash 2,\infty)^{d}$, $W$ is a weight on $\RR$ invariant under the reflections $\sigma_1, \ldots,\sigma_d$, and $m$ is as in \eqref{molt} or \eqref{momt}. Then the multiplier operator $m(\La)$, defined initially on $L^2(\RR,dw_\a)$ by \eqref{mla}, extends uniquely to a bounded linear operator on $L^{p}(\RR,W dw_{\a})$, $W^+\in A_{p}^{\a,+}$, $1<p<\infty$, and to a bounded linear operator from $L^1(\RR,W dw_{\a})$ to weak $L^1(\RR,W dw_{\a})$, $W^+\in A_{1}^{\a,+}$.
\end{thm}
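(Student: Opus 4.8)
The plan is to follow the strategy outlined in the introduction: reduce the multiplier operator $m(\La)$ acting on $\eps$-symmetric functions on $\RR$ to a family of ``Laguerre-type'' operators living on the smaller space $(\R,dw_\a^+,|\cdot|)$, and then invoke the Calder\'on--Zygmund theory on that space of homogeneous type. First I would record the spectral/semigroup formula: since $m$ is of Laplace type, \eqref{mla} and Fubini (justified by boundedness of $\eta$ together with $\spec(\La)\subset[\ll_0^\a,\infty)$ and the fast decay of $e^{-t\ll_n^\a}$) give $m_\eta(\La)f = \int_0^\infty \big(\La T_t^\a f\big)\eta(t)\,dt$ in the $L^2$ sense, with an analogous identity $m_\mu(\La)f = \int_0^\infty T_t^\a f\,d\mu(t)$ in the Laplace--Stieltjes case, using \eqref{meas}. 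Via the decomposition $G_t^\a = \sum_{\eps\in\Z} G_t^{\a,\eps}$ and the fact that $G_t^{\a,\eps}(x,y)$ is built from the restricted index set $\mathcal N_\eps$, one splits $m(\La)$ along the $2^d$ symmetry classes; on each class the problem transfers to $\R$, where the relevant kernel becomes a sum of modified Bessel expressions and the measure is $dw_\a^+$. This is exactly the reduction from \cite{NS2}, so I would cite it and only sketch the bookkeeping.

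Next I would isolate the scalar building blocks: define, for each $\eps$, the kernel
\begin{align*}
K^{\a,\eps}(x,y) = \int_0^\infty \partial_t G_t^{\a,\eps,+}(x,y)\,\eta(t)\,dt \quad\text{(Laplace type)},\qquad
K^{\a,\eps}(x,y) = \int_0^\infty G_t^{\a,\eps,+}(x,y)\,d\mu(t)\quad\text{(Laplace--Stieltjes)},
\end{align*}
where $G_t^{\a,\eps,+}$ is the restriction of $G_t^{\a,\eps}$ to $\R\times\R$, and show that $m(\La)$ restricted to $\eps$-symmetric functions agrees, after the identification $f\mapsto f^+$, with the integral operator having kernel (a constant multiple of) $K^{\a,\eps}$ against $dw_\a^+$, at least off the diagonal and for nice $f$. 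The analytic heart of the argument is then the verification that each such operator is a Calder\'on--Zygmund operator on $(\R,dw_\a^+,|\cdot|)$: I must prove the growth estimate
\begin{align*}
|K^{\a,\eps}(x,y)| \lesssim \frac{1}{w_\a^+\big(B(x,|x-y|)\big)}
\end{align*}
and the Lipschitz-type estimates
\begin{align*}
|\nabla_x K^{\a,\eps}(x,y)| + |\nabla_y K^{\a,\eps}(x,y)| \lesssim \frac{1}{|x-y|}\,\frac{1}{w_\a^+\big(B(x,|x-y|)\big)},
\end{align*}
uniformly in $\eps$. Granting these, the general Calder\'on--Zygmund theorem for spaces of homogeneous type (together with the $L^2(dw_\a^+)$-boundedness, which is inherited from the $L^2(dw_\a)$-boundedness of $m(\La)$ via the orthogonal splitting into symmetry classes) yields boundedness on $L^p(\R,U\,dw_\a^+)$ for all $U\in A_p^{\a,+}$, $1<p<\infty$, and weak type $(1,1)$ for $U\in A_1^{\a,+}$. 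Summing over the $2^d$ classes and undoing the restriction $f\mapsto f^+$, using that $W$ is reflection invariant so $W^+\in A_p^{\a,+}$ controls the full weighted norm on $\RR$, gives Theorem \ref{main}.

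The main obstacle is unquestionably the kernel estimates, i.e.\ the step deferred to Section \ref{ker}. The difficulty is that $G_t^{\a,\eps,+}(x,y)$ involves the ratio $I_{\alpha_i+\eps_i}(x_iy_i/\sinh 2t)/(x_iy_i)^{\alpha_i+\eps_i}$, whose behavior changes character across the regimes $x_iy_i/\sinh 2t \lesssim 1$ and $\gtrsim 1$ (small-argument power behavior versus exponential growth of $I_\nu$), and one must integrate in $t$ against a merely bounded $\eta$ — with no smoothness and no decay — or against a signed measure $\mu$ satisfying only \eqref{meas}. The standard device, going back to Sasso \cite{Sa1} and refined in \cite{NS1,NS3,NS2,Sz,Sz1}, is to insert sharp two-sided bounds for $I_\nu$, change variables to tame the $\sinh$, and reduce each piece to elementary integrals of the form $\int_0^\infty t^{-a} \exp(-c\,q_\pm(x,y,s)/t)\,\ldots$ that can be estimated by comparison with the homogeneous-space ball volume $w_\a^+(B(x,|x-y|)) \simeq |x-y|^d \prod_j (|x_j|+|x-y|)^{2\alpha_j+1}$. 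Carrying this out uniformly in $\eps$ and differentiating in $x$ and $y$ under the integral sign — controlling the extra singular factors produced by $\partial_{x_i}$ hitting the Bessel ratio and the Gaussian exponent — is the technically demanding part; everything else is soft functional analysis and the invocation of results already available in the literature.
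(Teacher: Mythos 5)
Your proposal follows essentially the same route as the paper: the symmetrization reduction from \cite{NS2} to the Laguerre-type operators $m^{\eps,+}(\La)$ on the space of homogeneous type $(\R,dw_\a^+,|\cdot|)$, association of these operators with the kernels built from $\partial_t G_t^{\a,\eps}$ (Laplace type, up to a sign) and from $G_t^{\a,\eps}$ (Laplace--Stieltjes type, using \eqref{meas}), verification of the growth and gradient standard estimates, and then the weighted Calder\'on--Zygmund theorem together with the norm equivalence for reflection-invariant weights. The only real difference is presentational: the paper executes the kernel estimates via the Schl\"afli-type representation \eqref{hk} with the measure $\Pi_{\a+\eps}$ (Lemmas \ref{mod}--\ref{oq} combined with derivative bounds imported from \cite{Sz1}) rather than by inserting two-sided Bessel asymptotics, but this is exactly the technique of the references you cite, so your outline matches the paper's argument.
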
 

The proof we give relies on reducing the problem to showing similar mapping properties for certain operators emerging from $m(\La)$ and related to the restricted space $(\R,dw_\a^+)$. The details are as follows. 
Let $1 \le p < \infty$ be fixed and let $W$ be a symmetric weight on $\RR$ such that $W^+ \in A_p^{\a,+}$.
We decompose $m(\La)$ into a finite sum of $L^2$-bounded operators
\begin{align*}
m(\La)=\sum_{\eps\in\Z} m^{\eps}(\mathcal{L}_{\a}),
\end{align*} 
where
$$
m^{\eps}(\mathcal{L}_{\a})
=
\sum_{n=0}^{\infty} m(\ll_n^\a)
\sum_{\substack{|k|=n\\ k\in \mathcal{N}_{\eps}}}
\langle f,h_k^{\alpha}
  \rangle_{dw_{\alpha}}h_k^{\alpha}, 
\qquad f\in L^2(\RR,dw_{\alpha}).
$$
Then we proceed as in \cite[Section 3]{NS2}, see also \cite[Section 2]{Sz1}. We split a function $f\in L^2(\RR,dw_\a)$ into a finite sum of $\eps$-symmetric functions $f_\eps$,
\begin{align*}
f=\sum_{\eps\in\Z}f_\eps.
\end{align*} 
We also introduce the Laguerre-type operators acting on $L^2(\R,dw_\a^+)$,
\begin{align}\label{eq2}
m^{\eps,+}(\mathcal{L}_{\a}) f
=
\sum_{n=0}^{\infty} m(\ll_n^\a)
\sum_{\substack{|k|=n\\ k\in \mathcal{N}_{\eps}}}
\langle f,h_k^{\alpha}
  \rangle_{dw_{\alpha}^+}h_k^{\alpha}.
\end{align}
Since $h_k^\a$ is $\eps$-symmetric if and only if $k\in \mathcal{N}_\eps$, we see that
\begin{align*}
m(\La) f=
\sum_{\eps\in\Z} m^{\eps}(\La) f_{\eps}, \qquad f\in L^2(\RR,dw_\a).
\end{align*}
Taking into account the fact that $m^{\eps}(\La)f_\eps$ is $\eps$-symmetric, and the relation $\langle f_{\eps}, h_{k}^{\a}\rangle_{dw_{\a}}=2^{d}\langle f_{\eps}^{+}, h_{k}^{\a}\rangle_{dw_{\a}^{+}}$ for $k\in\mathcal{N}_{\eps}$, we obtain
\begin{align*}
\|m(\La) f\|_{L^p(\RR,Wdw_\a)}
\leq&\,
2^{d\slash p} \sum_{\eps\in\Z} \|m^{\eps}(\La)f_{\eps}\|_{L^p(\R,W^+dw_\a^+)}\\
=&\,
2^{d+d\slash p}\sum_{\eps\in\Z} \|m^{\eps,+}(\La)f_{\eps}^+\|_{L^p(\R,W^+dw_\a^+)},
\end{align*}
for $f\in L^2(\RR,dw_\a)\cap L^p(\RR,Wdw_\a)$. Since we have, see \cite[p.\,6]{NS2} for the unweighted case,
\begin{align*}
\|f\|_{L^p(\RR,Wdw_{\alpha})} \simeq \sum_{\varepsilon \in \mathbb{Z}_2^d} 
\|f^{+}_{\varepsilon}\|_{L^p(\mathbb{R}^d_+,W^{+}dw_{\alpha}^+)},
\end{align*}
the above estimates together with the bounds
\begin{align*}
\|m^{\eps,+}(\La) f_{\eps}^+\|_{L^p(\R,W^+dw_\a^+)}
\lesssim
\|f_{\eps}^+\|_{L^p(\R,W^+dw_\a^+)},\qquad \eps\in\Z,
\end{align*}
imply the estimate
\begin{align*}
\|m(\La) f\|_{L^p(\RR,Wdw_\a)}
\lesssim
\|f\|_{L^p(\RR,Wdw_\a)}.
\end{align*}
An analogous implication involving weighted weak type $(1,1)$ inequalities is also valid. Thus we reduced proving Theorem \ref{main} to showing the following.

\begin{thm}\label{main+}
Assume that $\a\in[-1\slash 2,\infty)^{d}$, $\eps\in\Z$ and $m$ is of the form \eqref{molt} or \eqref{momt}. Then the Laguerre-type operators $m^{\eps,+}(\La)$, defined initially on $L^2(\R,dw_{\a}^+)$ by \eqref{eq2}, extend uniquely to bounded linear operators on $L^{p}(\R,U dw_{\a}^{+})$, $U\in A_{p}^{\a,+}$, $1<p<\infty$, and to bounded linear operators from $L^1(\R,U dw_{\a}^{+})$ to weak $L^{1}(\R,U dw_{\a}^{+})$, $U\in A_{1}^{\a,+}$.
\end{thm}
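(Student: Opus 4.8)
The plan is to realize each $m^{\eps,+}(\La)$ as a Calder\'on--Zygmund operator on the space of homogeneous type $(\R,dw_\a^+,|\cdot|)$ and then invoke the weighted Calder\'on--Zygmund theory (weighted $L^p$ boundedness for $A_p$ weights, $1<p<\infty$, and weighted weak type $(1,1)$ for $A_1$ weights) to conclude. The $L^2(\R,dw_\a^+)$ boundedness is free: since $\spec(\La)\subset[\ll_0^\a,\infty)$ and $m$ is bounded on $[\ll_0^\a,\infty)$ (noted after \eqref{mla}), and since $\{h_k^\a:k\in\mathcal N_\eps\}$ is orthogonal in $L^2(\R,dw_\a^+)$, the operator in \eqref{eq2} is bounded on $L^2(\R,dw_\a^+)$. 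So the entire task reduces to producing an integral kernel $K^{\eps,+}(x,y)$ that represents $m^{\eps,+}(\La)$ off the diagonal and satisfies the standard growth and smoothness (Calder\'on--Zygmund) estimates
\begin{align*}
|K^{\eps,+}(x,y)|\lesssim \frac{1}{w_\a^+(B(x,|y-x|))},\qquad
|\nabla_{\!x}K^{\eps,+}(x,y)|+|\nabla_{\!y}K^{\eps,+}(x,y)|\lesssim \frac{1}{|y-x|\,w_\a^+(B(x,|y-x|))},
\end{align*}
for $x\neq y$ in $\R$.

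The kernel itself comes from the subordination-type formulas defining $m$. For the Laplace type case \eqref{molt} one writes, using that $e^{-t\La}$ has kernel $G^\a_t$ with the $\eps$-component $G^{\a,\eps}_t$, the formal identity $m_\eta(\La)=\La\int_0^\infty e^{-t\La}\eta(t)\,dt$, leading to a candidate kernel of the shape $\int_0^\infty (-\partial_t)G^{\a,\eps}_t(x,y)\,\eta(t)\,dt$ after an integration by parts in $t$ (the boundary terms vanish for $x\ne y$); for the Laplace--Stieltjes case \eqref{momt} the candidate is simply $\int_0^\infty G^{\a,\eps}_t(x,y)\,d\mu(t)$. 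The next step is the somewhat technical but by-now-standard verification that these integrals converge for $x\ne y$ and that $m^{\eps,+}(\La)$ is \emph{associated} with this kernel in the Calder\'on--Zygmund sense, i.e.\ $\langle m^{\eps,+}(\La)f,g\rangle_{dw_\a^+}=\iint K^{\eps,+}(x,y)f(y)\overline{g(x)}\,dw_\a^+(y)\,dw_\a^+(x)$ for $f,g\in C_c^\infty(\R)$ with disjoint supports; this is exactly the content the excerpt promises to establish in Section~\ref{pre}, so I may take it as given and focus on the estimates.

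The heart of the proof — and the main obstacle — is the kernel estimates, carried out in Section~\ref{ker}. The difficulty is that one must bound, uniformly, the time integrals of $G^{\a,\eps}_t(x,y)$ and of $\partial_t G^{\a,\eps}_t(x,y)$ (and their spatial gradients) against the measure of the ball $B(x,|y-x|)$. The explicit formula for $G^{\a,\eps}_t$ involves $(2\sinh 2t)^{-d}$, the Gaussian-type factor $\exp(-\tfrac12\coth(2t)(|x|^2+|y|^2))$, and products of ratios $I_{\a_i+\eps_i}(x_iy_i/\sinh 2t)/(x_iy_i)^{\a_i+\eps_i}$; the standard device — going back to Sasso and developed by Nowak--Stempak and the author — is to insert the sharp two-sided bounds for $I_\nu$ (polynomial growth near $0$, exponential asymptotics $I_\nu(z)\sim e^z/\sqrt{z}$ at infinity), thereby replacing $G^{\a,\eps}_t$ by a more tractable expression, then split the integral over $t$ at, say, $t\simeq 1$ and further according to the size of $x_iy_i/\sinh 2t$, and estimate each region by direct calculus, keeping careful track of the exponent $\exp(-c\,q_{\pm}(x,y,s)/\cdot)$ where $q_{\pm}$ is the relevant quadratic form appearing in the paper's notation (\texttt{\textbackslash bbb} etc.). The differentiated kernels require the analogous bounds for $I_\nu'$ and produce extra factors of $\coth(2t)$ or $1/\sinh 2t$ that must be absorbed; handling the endpoint behavior $t\to0^+$ (where $\sinh 2t\to0$) versus $t\to\infty$ (where $G^{\a,\eps}_t$ decays exponentially because $\ll_0^\a>0$) is where most of the work lies. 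Once all four estimates are in hand for every $\eps$, the Calder\'on--Zygmund theorem on $(\R,dw_\a^+,|\cdot|)$ yields Theorem~\ref{main+}.
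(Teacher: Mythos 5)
Your overall strategy coincides with the paper's: reduce Theorem \ref{main+} to showing that each $m^{\eps,+}(\La)$ is a Calder\'on--Zygmund operator on $(\R,dw_\a^+,|\cdot|)$, i.e.\ establish $L^2(dw_\a^+)$ boundedness, the association with the kernels $-\int_0^\infty\partial_t G_t^{\a,\eps}\,\eta(t)\,dt$ and $\int_0^\infty G_t^{\a,\eps}\,d\mu(t)$, and the standard growth and gradient estimates, then invoke weighted Calder\'on--Zygmund theory. The problem is that the part of this program which constitutes the actual content of the theorem --- the kernel estimates of Theorem \ref{kest} --- is not proved in your proposal but only described as ``technical but by-now-standard.'' In the paper this is explicitly the most technical part, occupying all of Section \ref{ker}, and it is carried out not by inserting two-sided asymptotics of $I_\nu$ and splitting in $t$ and in the size of $x_iy_i/\sinh 2t$ (your sketch), but by Schl\"afli's integral representation, which rewrites $G_t^{\a,\eps}$ as the integral \eqref{hk} over $[-1,1]^d$ against $\Pi_{\a+\eps}$, followed by the change of variable $\zeta=\tanh t$, Lemma \ref{mod}, Lemma \ref{oq}, and --- crucially --- Lemma \ref{lem4}, which is the bridge converting bounds of the form $(x+y)^{2\delta}\int (q_+)^{-d-|\a|-|\delta|(-1/2)}\,\Pi_{\a+\delta+\kappa}(ds)$ into the standard estimates $1/w_\a^+(B(x,|x-y|))$ and $1/(|x-y|\,w_\a^+(B(x,|x-y|)))$.

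Concretely, two things are missing from your sketch even as a plan. First, whatever pointwise bounds on $G_t^{\a,\eps}$, $\partial_tG_t^{\a,\eps}$ and $\partial_{x_j}G_t^{\a,\eps}$ you obtain from Bessel asymptotics, you must explain how the time-integrated quantity is dominated by the ball measure $w_\a^+(B(x,|x-y|))\simeq |x-y|^d\prod_i(x_i+|x-y|)^{2\a_i+1}$; this is nontrivial in dimension $d>1$ (the components $\eps_i$ produce factors $x_iy_i$ of varying homogeneity) and is exactly what Lemma \ref{lem4} supplies in the paper; without an analogue of it your case analysis has no stated endpoint. Second, in the Laplace--Stieltjes case the only hypothesis on $\mu$ is \eqref{meas}, so for both the growth and the gradient bounds one needs estimates of $e^{t\ll_0^\a}G_t^{\a,\eps}(x,y)$ and $e^{t\ll_0^\a}\partial_{x_j}G_t^{\a,\eps}(x,y)$ that are \emph{uniform in} $t$ and already of the standard-estimate form (in the paper this is done via Lemma \ref{oq}, trading powers of $\zeta^{-1}$ for a smaller constant in the exponential); your remark that $G_t^{\a,\eps}$ decays like $e^{-t\ll_0^\a}$ points in the right direction but is not enough, since one must also control the singular behaviour as $t\to0^+$ uniformly before integrating $d|\mu|(t)$. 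Taking Proposition \ref{assoc} as given is acceptable, but as it stands the proposal defers essentially all of the work that the paper actually does to prove Theorem \ref{main+}.
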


The proof of Theorem \ref{main+} will be obtained by means of the general Calder\'on-Zygmund theory. More precisely, we will show that each of the operators $m^{\eps,+}(\La)$, $\eps\in\Z$, is a Calder\'on-Zygmund operator in the sense of the space of homogeneous type $(\R,dw_\a^+,|\cdot|)$. It is well known that the classical Calder\'on-Zygmund theory works, with appropriate adjustments, when the underlying space is of homogeneous type; see for instance the comments and references in \cite[p.\,649]{NS1}.

The following result combined with the Calder\'on-Zygmund theory implies Theorem \ref{main+}, and thus also Theorem \ref{main}.
The corresponding proof splits naturally into proving Proposition \ref{assoc} and Theorem \ref{kest} below.

\begin{thm}\label{CZ}
Assume that $\a\in[-1\slash 2,\infty)^{d}$ and $\eps\in\Z$. Then the Laguerre-type operators from Theorem \ref{main+} are Calder\'on-Zygmund operators in the sense of the space of homogeneous type $(\R,dw_\a^+,|\cdot|)$.
\end{thm}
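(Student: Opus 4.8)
The plan is to verify the two defining properties of a Calderón--Zygmund operator relative to the space of homogeneous type $(\R,dw_\a^+,|\cdot|)$: first, $L^2(\R,dw_\a^+)$-boundedness, and second, association with a kernel $K^{\eps}(x,y)$ satisfying the standard growth and smoothness (Hörmander-type) estimates
$$
|K^{\eps}(x,y)|\lesssim \frac{1}{w_\a^+(B(x,|x-y|))},\qquad
|\nabla_{x,y}K^{\eps}(x,y)|\lesssim \frac{1}{|x-y|\,w_\a^+(B(x,|x-y|))},
$$
for $x\neq y$ in $\R$. The $L^2$-boundedness is essentially free: since $m$ is a bounded function on $[\ll_0^\a,\infty)\supset\spec(\La)$ and $m^{\eps,+}(\La)$ is just the spectral multiplier $m(\La)$ restricted to the closed subspace spanned by $\{h_k^\a:k\in\mathcal N_\eps\}$ (realized on $L^2(\R,dw_\a^+)$ via the $2^{d/2}$-scaling), the spectral theorem gives $\|m^{\eps,+}(\La)\|_{L^2\to L^2}\le\|m\|_\infty<\infty$. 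So the entire content is the kernel part.

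The next step is to produce an explicit integral representation for $m^{\eps,+}(\La)$. Using the subordination formulas \eqref{molt} and \eqref{momt} together with the heat kernel expansion \eqref{gie} restricted to $\mathcal N_\eps$, one formally writes, for the Laplace type case,
$$
m^{\eps,+}(\La)f(x)=\int_{\R}K^{\eps}(x,y)f(y)\,dw_\a^+(y),\qquad
K^{\eps}(x,y)=\int_0^\infty \partial_t\big[{-}G_t^{\a,\eps}(x,y)\big]\,\eta(t)\,dt,
$$
(plus an appropriate boundary/projection term coming from the factor $z$ in \eqref{molt}), and in the Laplace--Stieltjes case $K^{\eps}(x,y)=\int_0^\infty G_t^{\a,\eps}(x,y)\,d\mu(t)$. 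Making this rigorous—showing the series defining $m^{\eps,+}(\La)$ on $L^2$ agrees with this integral operator off the diagonal, and that $K^\eps$ is locally integrable away from the diagonal with the pairing $\langle m^{\eps,+}(\La)f,g\rangle$ given by the double integral for $f,g$ with disjoint compact supports in $\R$—is exactly the statement of Proposition \ref{assoc}, which the excerpt says will be proved separately; I would invoke it here.

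With association in hand, the heart of the matter is Theorem \ref{kest}: the kernel bounds. Here I would reduce everything to pointwise estimates on $G_t^{\a,\eps}(x,y)$ and its $t$-, $x$- and $y$-derivatives, and then integrate against $\eta(t)\,dt$ (respectively $d|\mu(t)|$) over $t\in(0,\infty)$. The standard technique—following Sasso and Nowak--Stempak as the introduction indicates—is to split the $t$-integral at $t\sim 1$ (or at $t\sim|x-y|^2$ near the diagonal), use the small-$t$ asymptotics of the modified Bessel functions $I_{\alpha_i+\eps_i}(x_iy_i/\sinh 2t)$ together with the Gaussian factor $\exp(-\tfrac12\coth(2t)(|x|^2+|y|^2))$ to extract exponential decay in $|x-y|^2/t$ and a manageable polynomial factor, and use the large-$t$ exponential decay $e^{-t\ll_0^\a}$ to handle the tail. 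After carrying out the $t$-integration one compares the result with $1/w_\a^+(B(x,|x-y|))$, using the explicit behavior $w_\a^+(B(x,r))\simeq \prod_{i}\big(|x_i|+r\big)^{2\a_i+1}\cdot(\text{volume factor})$ of the doubling measure. The main obstacle—and the only genuinely laborious part—is precisely this array of Bessel-function and Gaussian estimates, carried out uniformly in all $x,y\in\R$ with $x\ne y$, uniformly in $\eps$, and for the full range $\a\in[-1/2,\infty)^d$ (where the boundary value $\a_i=-1/2$ with $\eps_i=0$ forces care because then $\alpha_i+\eps_i=-1/2$ sits at the edge of the Bessel parameter range). The smoothness estimates require differentiating the Bessel functions, for which one uses identities like $\frac{d}{dz}\big(z^{-\nu}I_\nu(z)\big)=z^{-\nu}I_{\nu+1}(z)$ to stay within the same family of bounds. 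Everything downstream—the weighted $L^p$ and weak-$(1,1)$ conclusions with $A_p^{\a,+}$ weights—is then automatic from the Calderón--Zygmund machinery on spaces of homogeneous type, so I would not belabor it. This is why the proof of Theorem \ref{CZ} is organized as Proposition \ref{assoc} (association, the bookkeeping) plus Theorem \ref{kest} (the kernel estimates, the real work), which is the decomposition the excerpt announces.
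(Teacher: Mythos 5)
Your proposal is correct and follows essentially the same route as the paper: $L^2(\R,dw_\a^+)$-boundedness from the boundedness of $m$ on $[\ll_0^\a,\infty)\supset\spec(\La)$, plus the reduction of the Calder\'on--Zygmund property to Proposition \ref{assoc} (association with $K^{\a,\eps}$, resp.\ $\mathcal{K}^{\a,\eps}$, with no extra boundary term needed since $ze^{-tz}=-\partial_t e^{-tz}$) and to Theorem \ref{kest} (growth and gradient bounds). The only divergence is in your sketch of how Theorem \ref{kest} would be proved (direct Bessel asymptotics with a split of the $t$-integral), whereas the paper uses the Schl\"afli-type representation \eqref{hk} together with Lemmas \ref{mod}--\ref{oq} and estimates from \cite{Sz1}; since you invoke Theorem \ref{kest} as a separate result, this does not affect the argument for Theorem \ref{CZ} itself.
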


Formal computations, similar to those from \cite[Section 2]{W}, suggest that $m^{\eps,+}(\La)$, $\eps\in\Z$, where $m$ is either as in \eqref{molt} or as in \eqref{momt}, are associated with the kernels
\begin{align}\label{kera}
K^{\a,\eps}(x,y)=&\,
K_{\eta}^{\a,\eps}(x,y)=
-\int_0^\infty \partial_{t}G_t^{\a,\eps}(x,y) \, \eta(t)\,dt,
\qquad \eps\in\Z,\\\label{kerb}
\mathcal{K}^{\a,\eps}(x,y)=&\,
\mathcal{K}_{\mu}^{\a,\eps}(x,y)=
\int_0^\infty G_t^{\a,\eps}(x,y)\,d\mu(t),
\qquad \eps\in\Z,
\end{align}
respectively. The next result shows that this is indeed the case, at least in the Calder\'on-Zygmund theory sense.

\begin{pro}\label{assoc}
Let $\a\in[-1\slash 2,\infty)^{d}$ and $\eps\in\Z$.
\begin{itemize}
\item[(a)] If $m=m_{\eta}$ is a Laplace type multiplier, then $m^{\eps,+}(\La)$ is associated with the kernel $K^{\a,\eps}(x,y)$ in the sense that for any $f,g\in C^{\infty}_c(\R)$ with disjoint supports
\begin{align}\label{eq1}
\langle m^{\eps,+}(\La) f, g\rangle_{dw_\a^+}
=
\int_{\R}\int_{\R} K^{\a,\eps}(x,y) f(y)\,dw_\a^+(y) \,
\overline{g(x)}\,dw_\a^+(x).
\end{align}
\item[(b)] If $m=m_{\mu}$ is a Laplace-Stieltjes type multiplier, then $m^{\eps,+}(\La)$ is associated with the kernel $\mathcal{K}^{\a,\eps}(x,y)$ in the sense that for any $f,g\in C^{\infty}_c(\R)$ with disjoint supports
\begin{align}\label{eq3}
\langle m^{\eps,+}(\La) f, g\rangle_{dw_\a^+}
=
\int_{\R}\int_{\R} \mathcal{K}^{\a,\eps}(x,y) f(y)\,dw_\a^+(y) \,
\overline{g(x)}\,dw_\a^+(x).
\end{align}
\end{itemize}
\end{pro}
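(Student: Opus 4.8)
The plan is to verify the kernel association formulas \eqref{eq1} and \eqref{eq3} by a direct computation that exchanges the spectral sum defining $m^{\eps,+}(\La)$ with the integral over $t$ defining the kernels, using the disjointness of supports to justify all manipulations. First I would treat part (a). For $f,g\in C_c^\infty(\R)$ with disjoint supports, I would start from the spectral definition \eqref{eq2} and write
\begin{align*}
\langle m^{\eps,+}(\La)f,g\rangle_{dw_\a^+}
=\sum_{n=0}^\infty m(\ll_n^\a)\sum_{\substack{|k|=n\\ k\in\mathcal N_\eps}}\langle f,h_k^\a\rangle_{dw_\a^+}\overline{\langle g,h_k^\a\rangle_{dw_\a^+}}.
\end{align*}
Inserting the Laplace type representation $m(\ll_n^\a)=\ll_n^\a\int_0^\infty e^{-t\ll_n^\a}\eta(t)\,dt=-\int_0^\infty\partial_t e^{-t\ll_n^\a}\,\eta(t)\,dt$ and recognizing $\sum_{|k|=n,\,k\in\mathcal N_\eps}e^{-t\ll_n^\a}h_k^\a(x)h_k^\a(y)$ as the part of the series \eqref{gie} giving $G_t^{\a,\eps}(x,y)$, I would obtain formally
\begin{align*}
\langle m^{\eps,+}(\La)f,g\rangle_{dw_\a^+}
=-\int_{\R}\int_{\R}\Big(\int_0^\infty\partial_t G_t^{\a,\eps}(x,y)\,\eta(t)\,dt\Big)f(y)\,dw_\a^+(y)\,\overline{g(x)}\,dw_\a^+(x),
\end{align*}
which is precisely \eqref{eq1}. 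Part (b) is entirely analogous: one writes $m(\ll_n^\a)=\int_0^\infty e^{-t\ll_n^\a}\,d\mu(t)$ and the same summation produces $\int_0^\infty G_t^{\a,\eps}(x,y)\,d\mu(t)=\mathcal K^{\a,\eps}(x,y)$.

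The real content is justifying the interchanges of summation and integration. I would proceed in two stages. For the inner spectral sum, I would use that for $f,g\in C_c^\infty(\R)$ the generalized Hermite coefficients $\langle f,h_k^\a\rangle_{dw_\a^+}$ and $\langle g,h_k^\a\rangle_{dw_\a^+}$ decay rapidly in $|k|$ (faster than any polynomial), since $f,g$ lie in the domain of every power of $\La$; combined with the polynomial growth of $m(\ll_n^\a)$ and of the multiplicities, this gives absolute convergence of the spectral sum and lets me write it as an integral of the heat-kernel components against $f$ and $g$ in the $t$-variable. Concretely, I would first establish, for each fixed $t>0$, the pointwise identity $G_t^{\a,\eps}(x,y)=\sum_{|k|=n,\,k\in\mathcal N_\eps}$-summed series and the fact that $\langle T_t^{\a,\eps}f,g\rangle_{dw_\a^+}=\int\int G_t^{\a,\eps}(x,y)f(y)\,dw_\a^+(y)\,\overline{g(x)}\,dw_\a^+(x)$, where $T_t^{\a,\eps}$ is the obvious restricted semigroup; this is standard from the spectral theorem. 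Then the claim reduces to $\langle m^{\eps,+}(\La)f,g\rangle_{dw_\a^+}=-\int_0^\infty\partial_t\langle T_t^{\a,\eps}f,g\rangle_{dw_\a^+}\,\eta(t)\,dt$ (resp.\ $\int_0^\infty\langle T_t^{\a,\eps}f,g\rangle_{dw_\a^+}\,d\mu(t)$), which follows by dominated convergence in $n$ using the rapid decay of the coefficients, the bound $|\partial_t e^{-t\ll_n^\a}|=\ll_n^\a e^{-t\ll_n^\a}$, the boundedness of $\eta$, and, in the Stieltjes case, the finiteness condition \eqref{meas}.

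The main obstacle I anticipate is the Fubini step that pulls the $t$-integral (or $\mu$-integral) outside the spatial integrals $\int_{\R}\int_{\R}$, i.e.\ passing from $-\int_0^\infty\partial_t\langle T_t^{\a,\eps}f,g\rangle_{dw_\a^+}\,\eta(t)\,dt$ to the iterated spatial integral of the kernel $K^{\a,\eps}(x,y)=-\int_0^\infty\partial_t G_t^{\a,\eps}(x,y)\,\eta(t)\,dt$. This requires an integrable majorant for $|\partial_t G_t^{\a,\eps}(x,y)|\,|f(y)g(x)|$ on $(0,\infty)\times\supp g\times\supp f$. Here the disjointness of supports is crucial: it gives $|x-y|\geq c>0$ on the relevant region, and the small-$t$ Gaussian decay of $G_t^{\a,\eps}$ (via the $\exp(-\frac12\coth(2t)(|x|^2+|y|^2))$ factor together with the asymptotics of $I_\nu$) makes $\partial_t G_t^{\a,\eps}(x,y)$ integrable near $t=0$ uniformly for $x,y$ in compact sets at positive distance; for large $t$ one uses the exponential decay $e^{-t\ll_0^\a}$ of all terms, together with \eqref{meas} in the Stieltjes case. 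These are exactly the qualitative features that the kernel estimates of Section \ref{ker} (Theorem \ref{kest}) supply, so I would either quote the relevant local bounds from there or record a short self-contained lemma giving $\int_0^\infty\sup_{x\in E,\,y\in F}|\partial_t G_t^{\a,\eps}(x,y)|\,dt<\infty$ for disjoint compact $E,F\subset\R$. Once this majorant is in hand, Fubini applies and \eqref{eq1}, \eqref{eq3} follow; the remaining bookkeeping (the factor $2^d$ relating $dw_\a$ and $dw_\a^+$ on $\eps$-symmetric functions, already recorded in the reduction above) is routine.
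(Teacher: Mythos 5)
Your proposal is correct and follows essentially the same route as the paper: expand the pairing spectrally (Parseval), insert the Laplace or Laplace--Stieltjes representation of $m$, and justify the interchanges of summation, differentiation and integration using the rapid decay of the Hermite coefficients of $f,g$ (equivalently the polynomial bound on $h_k^\a$) together with the integrated kernel bounds $\int_0^\infty|\partial_t G_t^{\a,\eps}(x,y)\,\eta(t)|\,dt$ and $\int_0^\infty G_t^{\a,\eps}(x,y)\,d|\mu|(t)\lesssim w_\a^+(B(x,|y-x|))^{-1}$ established in the proof of Theorem \ref{kest}, with disjointness of supports supplying the Fubini majorant. The paper merely sketches these steps (referring to \cite[Proposition 3.3]{NS1}), so your write-up is a legitimate filling-in of the same argument rather than a different one.
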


\begin {proof}
Taking into account that for each $\eps\in\Z$ the system $\{2^{d\slash 2}h_k^\a : k\in\mathcal{N}_{\eps}\}$ is an orthonormal basis in $L^2(\R,dw_\a^+)$, by \eqref{eq2} and Parseval's identity we see that
\begin{align}\label{eq4}
\langle m^{\eps,+}(\La) f, g\rangle_{dw_\a^+}
=
\sum_{n=0}^\infty m(\ll_n^\a) 
\sum_{\substack{|k|=n\\ k\in \mathcal{N}_{\eps}}}
\langle f,h_k^{\a} \rangle_{dw_{\a}^+}
\langle h_k^{\a},g \rangle_{dw_{\a}^+}.
\end{align}
To finish the proof it suffices to show that the right-hand side of \eqref{eq4} coincides with the right-hand side of \eqref{eq1} or \eqref{eq3}, according to whether $m$ is as in \eqref{molt} or \eqref{momt}, respectively. This task reduces to justifying the possibility of changing the order of integration, summation and differentiation in the relevant expressions. Then the arguments are similar as for other operators, see for instance \cite[Proposition 3.3]{NS1}. The crucial facts are the estimate (cf. \cite[(2.3)]{Sz1}), 
\begin{align*}
|h_{k}^{\a}(x)|
&\lesssim 
(|k|+1)^{c_{d,\a}}
,\qquad k\in \N^{d},\quad x\in\R,
\end{align*}
and the bounds
\begin{align*}
\int_0^\infty \big|\partial_t G_t^{\a,\eps}(x,y) \, \eta(t)\big|\,dt
\lesssim&
\frac{1}{w_{\alpha}^{+}(B(x,|y-x|))},\qquad x,y\in\R,\quad x\ne y,\\
\int_0^\infty G_t^{\a,\eps}(x,y)\,d|\mu|(t)
\lesssim&
\frac{1}{w_{\alpha}^{+}(B(x,|y-x|))},\qquad x,y\in\R,\quad x\ne y,
\end{align*}
which are verified in the proof of Theorem \ref{kest}. 
We leave further details to the reader.
\end {proof}

Let $B(x,r)$ denote the ball centered at $x$ and of radius $r$, restricted to $\R$.
\begin{thm}\label{kest}
Assume that $\a\in[-1\slash 2,\infty)^{d}$ and $\eps\in\Z$.
\begin{itemize}
\item[(a)]
The kernel $K^{\a,\eps}(x,y)$ satisfies the growth estimate
\begin{align*}
|K^{\a,\eps}(x,y)|
&\lesssim
\frac{1}{w_{\alpha}^{+}(B(x,|y-x|))},\qquad x,y\in\R,\quad x\ne y,
\end{align*}
and the gradient condition
\begin{align*}
|\nabla_{\!x,y}K^{\a,\eps}(x,y)|
&\lesssim
\frac{1}{|x-y|} \;
\frac{1}{w_{\alpha}^{+}(B(x,|y-x|))},\qquad x,y\in\R,\quad x\ne y.
\end{align*}
\item[(b)]
Analogous estimates hold for $\mathcal{K}^{\a,\eps}(x,y)$.
\end{itemize}
\end{thm}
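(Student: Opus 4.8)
The plan is to prove Theorem \ref{kest} by reducing everything to sharp estimates for the component heat kernels $G_t^{\a,\eps}(x,y)$ and their $t$-derivatives, and then integrating in $t$ against $\eta(t)\,dt$ or $d\mu(t)$. The starting point is the explicit formula for $G_t^{\a,\eps}$: after the change of variables $s=\tanh t\in(0,1)$ (so that $\sinh 2t$, $\coth 2t$ become rational in $s$), the kernel splits into an exponential factor carrying the Gaussian-type decay in $|x|,|y|$ and a product of normalized Bessel quotients $I_{\a_i+\eps_i}(\cdot)/(\cdot)^{\a_i+\eps_i}$. One records the two standard facts about the function $z\mapsto I_\nu(z)z^{-\nu}$: it is smooth and positive on $[0,\infty)$, bounded near $0$, and behaves like $z^{-\nu-1/2}e^{z}$ at infinity; moreover its logarithmic derivative is $O(1)$ for small argument and $\to 1$ for large argument. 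Feeding these into the formula, one obtains pointwise bounds for $G_t^{\a,\eps}(x,y)$ and for $\partial_t G_t^{\a,\eps}(x,y)$, $\nabla_{x,y} G_t^{\a,\eps}(x,y)$, in which the Bessel arguments $x_iy_i/\sinh 2t$ are handled on the two regimes $x_iy_i \lessgtr \sinh 2t$ separately. This is precisely the technique ``having roots in Sasso's paper and developed by Nowak and Stempak,'' so the shape of the resulting bounds is known; the new input is only the bookkeeping of the multiplicity-shift $\eps$.

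With these kernel bounds in hand, the growth estimate in part (a) follows by integrating $|\partial_t G_t^{\a,\eps}(x,y)|$ over $t\in(0,\infty)$, using $\|\eta\|_\infty<\infty$. After the substitution $s=\tanh t$ the integral becomes $\int_0^1 (\cdots)\,ds$ with an integrand that, near $s=0$, is an integrable power of $s$ times $\exp(-c|x-y|^2/s)$-type decay (coming from completing the square in the exponent, since $\coth 2t - 1 \simeq s$ and the cross term reconstructs $|x-y|^2$), and near $s=1$ decays exponentially because of the $|x|^2+|y|^2$ factor. The resulting $t$-integral is then estimated by the standard Laguerre-type lemma that bounds $\int_0^1 s^{-\beta}\exp(-c(|x|^2+|y|^2) + \text{(mixed terms)})\,ds$ by a multiple of $1/w_\a^+(B(x,|y-x|))$; one splits $\R\times\R$ into the region where $|x-y|$ is small relative to $|x|+|y|$ (the ``local'' part, where one matches against the Euclidean ball volume $\prod|x_i|^{2\a_i+1}\cdot|x-y|^d$) and the complementary ``global'' part (where the exponential decay in $|x|^2+|y|^2$ is more than enough). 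The gradient condition is obtained in the same way, with one extra factor of order $1/|x-y|$ produced either by differentiating the Gaussian exponent (which brings down a factor $\lesssim (|x|+|y|)/s \lesssim 1/|x-y|$ on the local part after optimizing in $s$) or by differentiating a Bessel quotient (which brings down a bounded logarithmic-derivative factor). Part (b) is identical: one replaces $-\partial_t G_t^{\a,\eps}(x,y)\,\eta(t)\,dt$ by $G_t^{\a,\eps}(x,y)\,d\mu(t)$, uses the growth bound on $G_t^{\a,\eps}$ itself rather than on its derivative, and invokes the hypothesis \eqref{meas} on $|\mu|$ to control the contribution of $t$ near $0$ via the uniform-in-$t$ estimate $G_t^{\a,\eps}(x,y)\lesssim e^{-t\ll_0^\a}\cdot(\text{something integrable against }d|\mu|)$ plus the Gaussian-type bound $G_t^{\a,\eps}(x,y)\lesssim w_\a^+(B(x,|y-x|))^{-1}$.

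I expect the main obstacle to be the local/global splitting and, within the local regime, obtaining the correct $1/|x-y|$ saving for the gradient uniformly in the relative position of $x$ and $y$, including the delicate cases where some coordinates $x_i$ or $y_i$ are close to $0$ (so that the Bessel quotient is in its ``small argument'' regime and contributes nothing to the decay). In those cases the factor $|x_i|^{2\a_i+1}$ in the measure, with $\a_i$ possibly as large as one likes but bounded below by $-1/2$, must be tracked carefully, and one cannot afford to lose it; the substitute for the missing Gaussian decay in that coordinate is the $s^{-1/2}$-type singularity of the normalized Bessel function together with the measure of the ball. Managing this uniformly — keeping all implicit constants independent of $x,y$ but allowed to depend on $\a,\eps,d$ and on $\|\eta\|_\infty$ (resp. on $\mu$ through \eqref{meas}) — is the technical heart of the argument. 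Everything else is a careful but routine execution of the Nowak–Stempak scheme, and I would organize Section \ref{ker} accordingly: first the elementary Bessel estimates, then the pointwise heat-kernel bounds, then the integration-in-$t$ lemmas, and finally the assembly into the growth and gradient estimates for both kernels.
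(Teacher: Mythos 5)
Your overall strategy (pointwise bounds on $G_t^{\a,\eps}$ and its derivatives, then integration in $t$ against $\eta(t)\,dt$ or $d|\mu|(t)$) has the right shape, but there is a genuine gap exactly at the point you yourself call ``the technical heart'': the passage from the $t$-integrated bounds to the standard estimates $w_\a^+(B(x,|x-y|))^{-1}$ and $|x-y|^{-1}\,w_\a^+(B(x,|x-y|))^{-1}$ is never carried out, and the way you set it up would not go through as stated. First, the quantity you propose to match on the local part, $\prod_i|x_i|^{2\a_i+1}\,|x-y|^d$, is not $w_\a^+(B(x,|x-y|))$; the correct volume is comparable to $|x-y|^d\prod_i(x_i+|x-y|)^{2\a_i+1}$, and the two differ precisely in the regime $x_i\lesssim|x-y|$ that you flag as delicate. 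In dimension $d>1$ with general $\a\in[-1\slash 2,\infty)^d$, handling that regime by splitting each Bessel factor into small/large-argument cases leads to a proliferation of mixed cases, and nothing in your sketch resolves it. Second, the claim that differentiating the exponent ``brings down a factor $\lesssim(|x|+|y|)\slash s\lesssim 1\slash|x-y|$'' is not a pointwise inequality; the gain of $1\slash|x-y|$ can only be obtained by absorbing a power of the exponent's argument into the exponential at the cost of halving the constant (this is the role of Lemma \ref{oq} in the paper), and even after that one still needs the ball-volume comparison above, which is the actual content of the theorem.

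Note also that what you call ``the Nowak--Stempak scheme'' is not the small/large-argument Bessel asymptotics you describe. Their technique, and the paper's proof, rests on Schl\"afli's integral representation: $G_t^{\a,\eps}$ is written as in \eqref{hk} as an integral over $[-1,1]^d$ against $\Pi_{\a+\eps}(ds)$ of $\exp\big(-\frac{1}{4\zeta}q_{+}(x,y,s)-\frac{\zeta}{4}q_{-}(x,y,s)\big)$, with $\zeta=\tanh t$. With that representation the derivative bounds are quoted from \cite[(4.6)--(4.7)]{Sz1}, the $t$-integration in part (a) is done once and for all by Lemma \ref{mod} (packaged as Lemma \ref{lem1}), and in part (b) one uses the uniform-in-$t$ bound coming from Lemma \ref{oq} together with \eqref{meas}; in both cases one is left with negative powers of $q_+(x,y,s)$ integrated against $\Pi_{\a+\eps}$, and the comparison with $w_\a^+(B(x,|x-y|))$, including the extra factor $|x-y|^{-1}$ for the gradient, is exactly Lemma \ref{lem4} --- no local/global splitting and no case analysis in the Bessel argument are needed. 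If you wish to complete your direct-asymptotics route, you must prove an analogue of Lemma \ref{lem4} in each Bessel regime and track the weights $(x_i+|x-y|)^{2\a_i+1}$ there; as it stands, your proposal stops short of the estimates that constitute Theorem \ref{kest}.
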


The proof of Theorem \ref{kest} is the most technical part of the paper and is located in Section \ref{ker}.

We conclude this section with various comments and remarks related to the main result. 
First, note that our methods allow also to treat Laplace and Laplace-Stieltjes type multipliers related to the square root of $\La$. Indeed, consider the multiplier operator
\begin{align}\label{mPoisson}
m(\sqrt{\La}) f=
\sum_{n=0}^{\infty} m(\sqrt{\ll_n^\a})\sum_{|k|=n}\langle f,h_k^{\alpha}
  \rangle_{dw_{\alpha}}h_k^{\alpha}, 
\qquad f\in L^2(\RR,dw_{\alpha}),
\end{align} 
and for $\eps\in\Z$ the Poisson-Laguerre-type operators
\begin{align}\label{PoiLag}
m^{\eps,+}(\sqrt{\mathcal{L}_{\a}}) f
=
\sum_{n=0}^{\infty} m(\sqrt{\ll_n^\a})
\sum_{\substack{|k|=n\\ k\in \mathcal{N}_{\eps}}}
\langle f,h_k^{\alpha}
  \rangle_{dw_{\alpha}^+}h_k^{\alpha},
\qquad f\in L^2(\R,dw_{\alpha}^{+}),
\end{align}
where $m$ is as in Definition \ref{def} with $\ll_0^{\a}$ replaced by $\sqrt{\ll_0^{\a}}$. Then the operators $m(\sqrt{\La})$ and $m^{\eps,+}(\sqrt{\mathcal{L}_{\a}})$ are well defined and bounded on $L^2(\RR,dw_{\a})$ and $L^2(\R,dw_{\a}^{+})$, respectively. The integral kernels associated with $m^{\eps,+}(\sqrt{\mathcal{L}_{\a}})$ have analogous forms to \eqref{kera} and \eqref{kerb}, where $G_t^{\a,\eps}(x,y)$ should be replaced by the subordinated kernel
\begin{align*}
P_t^{\a,\eps} (x,y)=\int_0^\infty \,G_{t^{2}\slash (4u)}^{\a,\eps} (x,y)\,
\frac{e^{-u}\,du}{\sqrt{\pi u}}.
\end{align*}
Proceeding similarly as in the case of $m(\La)$, with only slightly more effort we obtain the following result.

\begin{thm}\label{Poisson1}
Assume that $\a\in[-1\slash 2,\infty)^{d}$ and $\eps\in\Z$. Then the Poisson-Laguerre-type operators $m^{\eps,+}(\sqrt{\La})$, $\eps\in\Z$, defined by \eqref{PoiLag}, are Calder\'on-Zygmund operators in the sense of the space of homogeneous type $(\R,dw_\a^+,|\cdot|)$. Consequently, each of these operators, defined initially on $L^2(\R,dw_{\a}^+)$, extends uniquely to a bounded linear operator on $L^{p}(\R,U dw_{\a}^{+})$, $U\in A_{p}^{\a,+}$, $1<p<\infty$, and to a bounded linear operator from $L^1(\R,U dw_{\a}^{+})$ to weak $L^{1}(\R,U dw_{\a}^{+})$, $U\in A_{1}^{\a,+}$.
\end{thm}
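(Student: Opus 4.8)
The plan is to mirror, essentially verbatim, the scheme already used for $m(\La)$, taking advantage of the subordination formula to transfer all the work to the heat-kernel estimates of Theorem \ref{kest}. First I would record that, by the spectral theorem, $m(\sqrt{\La})$ and $m^{\eps,+}(\sqrt{\La})$ are bounded on the respective $L^2$ spaces, since $m$ given by \eqref{molt} or \eqref{momt} (with $\ll_0^{\a}$ replaced by $\sqrt{\ll_0^{\a}}$) is bounded on $[\sqrt{\ll_0^{\a}},\infty)$ and $\spec(\sqrt{\La})\subset[\sqrt{\ll_0^{\a}},\infty)$. Then, exactly as in the reduction preceding Theorem \ref{main+}, I would decompose $m(\sqrt{\La})=\sum_{\eps\in\Z}m^{\eps}(\sqrt{\La})$, split $f=\sum_\eps f_\eps$ into $\eps$-symmetric pieces, pass to the restricted space $(\R,dw_\a^+)$ using $\langle f_\eps,h_k^\a\rangle_{dw_\a}=2^d\langle f_\eps^+,h_k^\a\rangle_{dw_\a^+}$ for $k\in\mathcal N_\eps$, and invoke the norm equivalence $\|f\|_{L^p(Wdw_\a)}\simeq\sum_\eps\|f_\eps^+\|_{L^p(W^+dw_\a^+)}$; this shows that the whole theorem follows once each $m^{\eps,+}(\sqrt{\La})$ is shown to be a Calder\'on-Zygmund operator on $(\R,dw_\a^+,|\cdot|)$, after which the general Calder\'on-Zygmund theory on spaces of homogeneous type (as in \cite[p.\,649]{NS1}) gives the weighted $L^p$ and weak type $(1,1)$ bounds.

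Next I would identify the kernels. Using the standard subordination $e^{-t\sqrt{z}}=\int_0^\infty e^{-t^2 z/(4u)}\,\frac{e^{-u}\,du}{\sqrt{\pi u}}$, the component heat kernels $G_t^{\a,\eps}$ get replaced by the subordinated kernels $P_t^{\a,\eps}(x,y)=\int_0^\infty G_{t^2/(4u)}^{\a,\eps}(x,y)\,\frac{e^{-u}\,du}{\sqrt{\pi u}}$, and a Fubini/formal-computation argument as in \cite[Section 2]{W} and the proof of Proposition \ref{assoc} shows that $m^{\eps,+}(\sqrt{\La})$ is associated (in the Calder\'on-Zygmund sense, i.e.\ for $f,g\in C_c^\infty(\R)$ with disjoint supports) with
\begin{align*}
K_\eta^{\a,\eps,\sqrt{}}(x,y)=-\int_0^\infty\partial_t P_t^{\a,\eps}(x,y)\,\eta(t)\,dt,\qquad
\mathcal K_\mu^{\a,\eps,\sqrt{}}(x,y)=\int_0^\infty P_t^{\a,\eps}(x,y)\,d\mu(t),
\end{align*}
in the Laplace type and Laplace-Stieltjes type cases respectively. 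The justification of the interchange of sum, integral and derivative is exactly as in Proposition \ref{assoc}, requiring only the pointwise bound $|h_k^\a(x)|\lesssim(|k|+1)^{c_{d,\a}}$ together with the kernel bounds obtained below.

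The remaining point — and the only place where anything new is needed — is to derive the Calder\'on-Zygmund estimates for these subordinated kernels from Theorem \ref{kest}. Here I would argue that differentiating under the integral in $u$ is harmless and that the estimates for $P_t^{\a,\eps}$ reduce to those for $G_s^{\a,\eps}$ after the change of variable $s=t^2/(4u)$. Concretely, for the growth estimate one writes $|K_\eta^{\a,\eps,\sqrt{}}(x,y)|\le\int_0^\infty|\partial_t P_t^{\a,\eps}(x,y)|\,|\eta(t)|\,dt$, uses $\partial_t P_t^{\a,\eps}=\int_0^\infty\frac{t}{2u}\,\partial_s G_s^{\a,\eps}|_{s=t^2/(4u)}\,\frac{e^{-u}\,du}{\sqrt{\pi u}}$, and then the very bound $\int_0^\infty|\partial_s G_s^{\a,\eps}(x,y)\,\eta(s)|\,ds\lesssim 1/w_\a^+(B(x,|y-x|))$ from (the proof of) Theorem \ref{kest}(a), after a Fubini interchange and the substitution $s=t^2/(4u)$ (noting $\frac{t}{2u}\,dt\leftrightarrow ds$ for fixed $u$, and that the Gaussian factor $e^{-u}/\sqrt{\pi u}$ integrates to $1$); the same scheme applied to $\nabla_{x,y}$ gives the gradient condition, and the Laplace-Stieltjes case is handled identically using $\int_0^\infty G_s^{\a,\eps}(x,y)\,d|\mu|(s)\lesssim 1/w_\a^+(B(x,|y-x|))$. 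I expect the main (though still routine) obstacle to be bookkeeping the subordination: making sure the interchanges of $\int du$, $\int dt$, $\int d\mu$ and $\partial_t$ are legitimate, and checking that the integrability hypothesis \eqref{meas} (with $\ll_0^\a$ replaced by $\sqrt{\ll_0^\a}$) is exactly what is needed so that the bounds from Theorem \ref{kest} transfer without loss. Once the two Calder\'on-Zygmund conditions are in hand, standard Calder\'on-Zygmund theory on the space of homogeneous type $(\R,dw_\a^+,|\cdot|)$ together with the $L^2$-boundedness yields the claimed weighted $L^p$, $1<p<\infty$, and weak type $(1,1)$ estimates, completing the proof.
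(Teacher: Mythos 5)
Your proposal is correct and takes essentially the same route as the paper, which itself only sketches this theorem by replacing $G_t^{\a,\eps}$ with the subordinated kernel $P_t^{\a,\eps}$ and "proceeding similarly" — your substitution $s=t^2\slash(4u)$ argument transferring the bounds of Theorem \ref{kest} is exactly the intended extra step. The one place to phrase things slightly more carefully is the Laplace--Stieltjes case: rather than invoking the integrated bound $\int_0^\infty G_s^{\a,\eps}\,d|\mu|(s)\lesssim 1\slash w_\a^+(B(x,|x-y|))$, one uses the pointwise bounds $e^{s\ll_0^{\a}}G_s^{\a,\eps}(x,y)\lesssim 1\slash w_{\a}^{+}(B(x,|x-y|))$ (and its gradient analogue) from the proof of Theorem \ref{kest}(b) together with the subordination identity $\int_0^\infty e^{-t^{2}\ll_0^{\a}\slash(4u)}\,e^{-u}\,du\slash\sqrt{\pi u}=e^{-t\sqrt{\ll_0^{\a}}}$, so that \eqref{meas} with $\ll_0^{\a}$ replaced by $\sqrt{\ll_0^{\a}}$ is exactly what closes the estimate — precisely the check you flag at the end.
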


\begin{cor}\label{mainP}
Assume that $\a\in[-1\slash 2,\infty)^{d}$ and $W$ is a weight on $\RR$ invariant under the reflections $\sigma_1, \ldots,\sigma_d$. Then the multiplier operator $m(\sqrt{\La})$, defined initially on $L^2(\RR,dw_\a)$ by \eqref{mPoisson}, extends uniquely to a bounded linear operator on $L^{p}(\RR,W dw_{\a})$, $W^+\in A_{p}^{\a,+}$, $1<p<\infty$, and to a bounded linear operator from $L^1(\RR,W dw_{\a})$ to weak $L^1(\RR,W dw_{\a})$, $W^+\in A_{1}^{\a,+}$.
\end{cor}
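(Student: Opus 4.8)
The plan is to establish Theorem~\ref{Poisson1} by following verbatim the scheme already set up for $m(\La)$, with the heat kernel $G_t^{\a,\eps}(x,y)$ replaced by the subordinated Poisson-Laguerre kernel $P_t^{\a,\eps}(x,y)$, and then to deduce Corollary~\ref{mainP} from the reduction argument sketched after Theorem~\ref{main}. First I would observe that, since $\spec(\La)\subset[\ll_0^\a,\infty)$, we have $\spec(\sqrt{\La})\subset[\sqrt{\ll_0^\a},\infty)$, so the multipliers $m$ from Definition~\ref{def} (with $\ll_0^\a$ replaced by $\sqrt{\ll_0^\a}$) are bounded on the relevant spectrum and the operators $m(\sqrt{\La})$ and $m^{\eps,+}(\sqrt{\La})$ are well-defined and $L^2$-bounded by the spectral theorem. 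Exactly as in the passage preceding Theorem~\ref{main+}, the weighted $L^p$ and weak type $(1,1)$ bounds for $m(\sqrt{\La})$ on $L^p(\RR,Wdw_\a)$ follow from the corresponding bounds for the pieces $m^{\eps,+}(\sqrt{\La})$ on $L^p(\R,W^+dw_\a^+)$, using the splitting into $\eps$-symmetric functions, the identity $\langle f_\eps,h_k^\a\rangle_{dw_\a}=2^d\langle f_\eps^+,h_k^\a\rangle_{dw_\a^+}$ for $k\in\mathcal{N}_\eps$, and the norm equivalence $\|f\|_{L^p(\RR,Wdw_\a)}\simeq\sum_\eps\|f_\eps^+\|_{L^p(\R,W^+dw_\a^+)}$. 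So Corollary~\ref{mainP} is immediate once Theorem~\ref{Poisson1} is proved.

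To prove Theorem~\ref{Poisson1} I would, as in the heat case, reduce matters via the Calder\'on-Zygmund theory on the space of homogeneous type $(\R,dw_\a^+,|\cdot|)$: it suffices to show that each $m^{\eps,+}(\sqrt{\La})$ is associated (in the sense of \eqref{eq1}) with the kernel obtained from \eqref{kera} or \eqref{kerb} upon substituting $P_t^{\a,\eps}$ for $G_t^{\a,\eps}$, and that this kernel satisfies the growth and gradient estimates of Theorem~\ref{kest}(a). The association statement is proved exactly as Proposition~\ref{assoc}, interchanging summation, integration (now in both $t$ and the subordination variable $u$) and differentiation; this is justified by the same pointwise bound $|h_k^\a(x)|\lesssim(|k|+1)^{c_{d,\a}}$ together with the standard kernel estimates which guarantee absolute convergence. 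The key point is therefore the kernel estimates for $P_t^{\a,\eps}$.

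For the kernel estimates, the natural strategy is subordination: since
\begin{align*}
P_t^{\a,\eps}(x,y)=\int_0^\infty G_{t^2/(4u)}^{\a,\eps}(x,y)\,\frac{e^{-u}\,du}{\sqrt{\pi u}},
\end{align*}
and the Laplace-type Poisson kernel $-\int_0^\infty\partial_t P_t^{\a,\eps}(x,y)\,\eta(t)\,dt$ (resp. $\int_0^\infty P_t^{\a,\eps}(x,y)\,d\mu(t)$) is a superposition of the heat kernel $G_s^{\a,\eps}$ over $s=t^2/(4u)\in(0,\infty)$ against a measure of finite total mass (after changing variables and using boundedness of $\eta$ or \eqref{meas}), the required estimates should follow by integrating the estimates already obtained in Theorem~\ref{kest}. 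Concretely, I would aim to show that the techniques of Section~\ref{ker} already produce, as an intermediate step, bounds of the form $\int_0^\infty|\partial_t^j G_t^{\a,\eps}(x,y)|\,t^{j-1}\,dt\lesssim 1/w_\a^+(B(x,|y-x|))$ and analogous ones for the spatial gradient, which upon the change of variables $s=t^2/(4u)$ transfer to $P_t^{\a,\eps}$; the factor $t^{j-1}$ is exactly what the subordination formula generates after differentiating in $t$. The main obstacle I anticipate is precisely this: verifying that the chain of estimates in Section~\ref{ker} is robust enough to absorb the extra $u$-integration uniformly, i.e. that the bounds one proves there hold in the slightly stronger ``integrated against $dt/t$ (or $dt$, $t\,dt$)'' form needed here, rather than merely pointwise in $t$. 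Since the author writes that this case requires ``only slightly more effort,'' I expect the estimates in Section~\ref{ker} to be stated in a form (or to follow by the same splitting of the $t$-integral into regions near $0$, near $\infty$, and the diagonal/off-diagonal dichotomy) that makes this transfer routine; carrying it out would be the only genuine work beyond citing Theorem~\ref{kest} and repeating the proofs of Proposition~\ref{assoc} and the reduction argument.
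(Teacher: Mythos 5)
Your proposal follows the paper's route exactly: Corollary \ref{mainP} is deduced from Theorem \ref{Poisson1} by the same symmetrization and reduction argument used to pass from Theorem \ref{main+} to Theorem \ref{main}, and Theorem \ref{Poisson1} is obtained by running the Calder\'on--Zygmund scheme (association as in Proposition \ref{assoc} plus standard kernel estimates) with $G_t^{\a,\eps}$ replaced by the subordinated kernel $P_t^{\a,\eps}$. One small simplification over what you anticipate: after the change of variables $s=t^{2}\slash(4u)$ the $u$-integration is absorbed outright (using $\int_0^\infty e^{-u}\,du\slash\sqrt{\pi u}=1$ for the Laplace type part and the subordination identity yielding the factor $e^{-t\sqrt{\ll_0^{\a}}}$ for the Laplace--Stieltjes part), so the bounds of Theorem \ref{kest} apply as stated and no strengthened ``weighted-in-$t$'' versions are needed.
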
 

Next, we comment on the relation between the auxiliary operators $m^{\eps,+}(\La)$, $\eps\in\Z$, and the Laguerre setting from \cite{NS1}. Note that for the particular $\eps_{0}=(0,\ldots,0)$ the Laguerre-type operator $m^{\eps_{0},+}(\La)$ coincides, up to the factor $2^{-d}$, with the multiplier operator $m(\La^{\ell})$ related to the Laguerre Laplacian $\La^{\ell}$ considered in \cite{NS1} and \cite{Sz}. More precisely,
\begin{align}\label{eq7}
m(\La^{\ell}) f=
\sum_{n=0}^{\infty} m(\lambda_{2n}^\a)\sum_{|k|=n}\langle f,\ell_k^{\alpha}
  \rangle_{d\mu_{\alpha}}\ell_k^{\alpha}, 
\qquad f\in L^2(\R,d\mu_{\alpha}),
\end{align}
where $\ell_k^\a$ are the Laguerre functions of convolution type and $\mu_\a\equiv w_\a^+$. Therefore the results of this section deliver also analogous results in the Laguerre setting of convolution type; see \cite[Section 2]{Sz1} for further explanations concerning the connection between the Dunkl and Laguerre settings.

\begin{thm}\label{Lag}
Assume that $\a\in[-1\slash 2,\infty)^{d}$ and $m$ is as in \eqref{molt} or \eqref{momt}. Then the Laguerre multiplier operator $m(\La^{\ell})$, defined initially on $L^2(\R,d\mu_\a)$ by \eqref{eq7}, is a Calder\'on-Zygmund operator in the sense of the space of homogeneous type $(\R,d\mu_\a,|\cdot|)$. Consequently, $m(\La^{\ell})$ extends uniquely to a bounded linear operator on $L^{p}(\R,U d\mu_{\a})$, $U\in A_{p}^{\a,+}$, $1<p<\infty$, and to a bounded linear operator from $L^1(\R,U d\mu_{\a})$ to weak $L^1(\R,U d\mu_{\a})$, $U\in A_{1}^{\a,+}$.
\end{thm}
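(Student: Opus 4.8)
The plan is to deduce Theorem~\ref{Lag} directly from Theorem~\ref{CZ} (equivalently from Proposition~\ref{assoc} and Theorem~\ref{kest}) by identifying the Laguerre multiplier operator $m(\La^{\ell})$ with the auxiliary operator $m^{\eps_{0},+}(\La)$ up to the harmless constant factor $2^{-d}$, as indicated in the discussion around \eqref{eq7}. First I would recall that the generalized Hermite functions $h_{k}^{\a}$ with $k\in\mathcal{N}_{\eps_{0}}$, i.e.\ with all coordinates even, are exactly (after the obvious change of variables $x_j\mapsto x_j^2$ and up to normalization) the tensor-product Laguerre functions $\ell_{k}^{\a}$ of convolution type, and that $\mu_\a=w_\a^+$; moreover the eigenvalue $\ll_{2n}^{\a}=4n+2|\a|+2d$ attached to $\ell_k^\a$ with $|k|=n$ matches $\ll_{|k|}^{\a}$ evaluated on the even multi-index $2k\in\mathcal{N}_{\eps_0}$. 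Hence $m(\La^{\ell})f$ and $2^{-d}m^{\eps_0,+}(\La)f$ are the same operator on $L^2(\R,d\mu_\a)=L^2(\R,dw_\a^+)$; this identification is standard and already used in \cite{NS1,Sz,Sz1}, so I would merely state it and refer there for the elementary verification.

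With that identification in hand, Theorem~\ref{CZ} (the case $\eps=\eps_0$) immediately gives that $m(\La^{\ell})$ is a Calder\'on--Zygmund operator in the sense of the space of homogeneous type $(\R,d\mu_\a,|\cdot|)$: it is bounded on $L^2(\R,d\mu_\a)$ by the spectral theorem (since $m$ is bounded on $[\ll_0^\a,\infty)\supset\spec(\La^\ell)$, exactly as argued for \eqref{mla}), it is associated with the integral kernel
\begin{align*}
K^{\ell}(x,y)=2^{-d}\,K^{\a,\eps_0}(x,y)
\quad\text{or}\quad
\mathcal{K}^{\ell}(x,y)=2^{-d}\,\mathcal{K}^{\a,\eps_0}(x,y),
\end{align*}
in the sense of Proposition~\ref{assoc}, and this kernel satisfies the standard growth and gradient (smoothness) estimates of Theorem~\ref{kest}(a) or (b) with the constant merely scaled by $2^{-d}$. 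Then the general Calder\'on--Zygmund theory adapted to spaces of homogeneous type yields, as for Theorem~\ref{main+}, the weighted $L^p$-boundedness for $U\in A_p^{\a,+}$, $1<p<\infty$, and the weighted weak type $(1,1)$ bound for $U\in A_1^{\a,+}$. The uniqueness of the bounded extension follows because $C_c^\infty(\R)$ (or, say, finite linear combinations of $\ell_k^\a$) is dense in each $L^p(\R,U d\mu_\a)$ and the operator is already defined there.

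I do not expect any genuine obstacle here: all the analytic content—the kernel estimates and the associativity—has already been established in Theorem~\ref{kest} and Proposition~\ref{assoc} for every $\eps\in\Z$, in particular for $\eps_0$, so Theorem~\ref{Lag} is a corollary obtained by transporting those results through the isometric correspondence between the even part of the Dunkl setting and the Laguerre convolution setting. The only point requiring a little care is to make the dictionary between $\{h_k^\a:k\in\mathcal{N}_{\eps_0}\}$ and $\{\ell_k^\a\}$ fully explicit—matching normalizing constants, the factor $2^d$ coming from $\langle f_{\eps_0},h_k^\a\rangle_{dw_\a}=2^d\langle f_{\eps_0}^+,h_k^\a\rangle_{dw_\a^+}$, and the shift $n\mapsto 2n$ in the eigenvalues—but this is routine bookkeeping already recorded in \cite[Section~2]{Sz1} and \cite{NS1}, which I would simply cite rather than reproduce.
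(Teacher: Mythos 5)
Your proposal is correct and takes essentially the same route as the paper: Theorem \ref{Lag} is obtained there exactly by identifying $m(\La^{\ell})$ with the auxiliary operator $m^{\eps,+}(\La)$ for $\eps=(0,\ldots,0)$ up to a constant factor (note the factor should be $m(\La^{\ell})=2^{d}m^{\eps_0,+}(\La)$, not $2^{-d}$, and no change of variables $x_j\mapsto x_j^2$ is needed, since $\ell_k^{\a}$ is, up to normalization, just the restriction of $h_{2k}^{\a}$ to $\R$), and then invoking Proposition \ref{assoc}, Theorem \ref{kest} and the Calder\'on--Zygmund theory for the space of homogeneous type $(\R,d\mu_\a,|\cdot|)$, exactly as for Theorem \ref{main+}. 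These two points are harmless bookkeeping slips and do not affect the argument.
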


A similar result holds for Laplace and Laplace-Stieltjes type multipliers related to the square root of $\La^{\ell}$, see Theorem \ref{Poisson1} and Corollary \ref{mainP} above.
\section{Kernel estimates}\label{ker}
\setcounter{equation}{0}

This section delivers proofs of the relevant kernel estimates. We use the technique developed by Nowak and Stempak \cite{NS1,NS3,NS2}, which is based on Schl\"afli's integral representation for the modified Bessel function $I_{\nu}$ involved in the Dunkl heat kernel. This method was refined by the author in \cite{Sz} and \cite{Sz1} to obtain standard estimates for various kernels related to $\La^{\ell}$ and $\La$. Below, we will frequently invoke estimates obtained in the latter paper. Recall that we always assume that $\a\in[-1\slash 2, \infty)^d$.

Given $\eps\in\Z$, the $\eps$-component of the Dunkl heat kernel is given by, see \cite[Section 5]{NS3},
\begin{align}\nonumber
&G_{t}^{\a,\eps}(x,y)\\\label{hk}
&\quad=
\frac{1}{2^{d}}\Big(\frac{1-\zeta^{2}}{2\zeta}\Big)^{d+|\alpha|+|\eps|}(xy)^{\eps}
\int_{[-1,1]^{d}}\exp\Big(-{\frac{1}{4\zeta}}q_{+}(x,y,s)-\frac{\zeta}{4}q_{-}(x,y,s)\Big)\,\Pi_{\a+\eps}(ds),
\end{align}
where $(xy)^{\eps}=(x_{1}y_{1})^{\eps_{1}}\cdot\ldots\cdot (x_{d}y_{d})^{\eps_{d}}$,
\begin{align*}
q_{\pm}(x,y,s)=
|x|^2+|y|^2\pm 2\sum_{i=1}^{d}x_{i}y_{i}s_{i}
\end{align*}
and $t>0$ and $\z\in(0,1)$ are related by $\zeta=\tanh t$; equivalently
\begin{equation}\label{zz}
t=t(\z)=\frac{1}{2}\log\frac{1+\z}{1-\z}.
\end{equation}
The measure $\Pi_{\beta}$ appearing in \eqref{hk} is a product of one-dimensional measures, $\Pi_{\beta}=\bigotimes_{i=1}^{d}\Pi_{\beta_{i}}$, where $\Pi_{\beta_{i}}$ is given by the density
$$
\Pi_{\beta_{i}}(ds_{i}) = \frac{(1-s_{i}^2)^{\beta_{i}-1\slash 2}ds_{i}}{\sqrt{\pi} 2^{\beta_{i}}\Gamma{(\beta_{i}+1\slash 2)}},
    \qquad \beta_{i} > -1\slash 2,
$$
and in the limiting case $\Pi_{-1\slash 2}=\big( \delta_{-1} + \delta_1 \big)\slash \sqrt{2\pi}$, with $\delta_{-1}$ and $\delta_{1}$ denoting the point masses at $-1$ and $1$, respectively.

To estimate the kernels defined via $G_{t}^{\a,\eps}(x,y)$ we will need several auxiliary results. In particular, the following modification of \cite[Lemma 1.1]{StTo} will be useful.

\begin{lem}\label{mod}
Given $a>1$, we have
\begin{equation*}
\int_0^1(1-\z^2)^{-1\slash 2}\z^{-a}\exp(-T\zeta^{-1})\,d\z\lesssim T^{-a+1},
\qquad T>0.
\end{equation*}
\end{lem}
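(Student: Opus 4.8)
The plan is to reduce the estimate to a pair of elementary one–variable integral bounds and then combine them. First I would split the range of integration at $\z=1/2$. On the compact part $\z\in(1/2,1)$ the factor $(1-\z^2)^{-1/2}$ is unbounded but integrable: since $\z^{-a}\le 2^a$ and $\exp(-T\z^{-1})\le e^{-T}$ there, the contribution is $\lesssim e^{-T}$, which is $\lesssim T^{-a+1}$ for all $T>0$ (distinguish $T\le 1$, where $e^{-T}\le 1\lesssim T^{-a+1}$ because $a>1$, from $T\ge 1$, where $e^{-T}$ decays faster than any power). On the part $\z\in(0,1/2)$ we have $(1-\z^2)^{-1/2}\le (3/4)^{-1/2}$, so the integral is bounded by a constant times $\int_0^{1/2}\z^{-a}\exp(-T\z^{-1})\,d\z$.

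The core step is to bound $\int_0^{1/2}\z^{-a}\exp(-T\z^{-1})\,d\z$. I would substitute $u=T/\z$, so $\z=T/u$, $d\z=-T u^{-2}\,du$, and the limits $\z:0\to 1/2$ become $u:\infty\to 2T$. This turns the integral into
\begin{equation*}
\int_{2T}^{\infty}\Big(\frac{u}{T}\Big)^{a}e^{-u}\,\frac{T}{u^{2}}\,du
= T^{-a+1}\int_{2T}^{\infty}u^{a-2}e^{-u}\,du
\le T^{-a+1}\int_{0}^{\infty}u^{a-2}e^{-u}\,du.
\end{equation*}
The last integral is $\Gamma(a-1)$, which is finite precisely because $a>1$ (so $a-2>-1$). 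Hence this piece is $\lesssim T^{-a+1}$, uniformly in $T>0$, and combining with the estimate on $(1/2,1)$ gives the claim.

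I do not expect a serious obstacle here; the only points requiring a little care are the hypothesis $a>1$, which is used twice (once to make the Gamma integral converge at $u=0$, once to absorb $e^{-T}$ into $T^{-a+1}$ near $T=0$), and the fact that the implied constant must be independent of $T$, which the substitution makes transparent since all $T$–dependence is pulled out as the explicit factor $T^{-a+1}$. One could alternatively avoid the case split by noting $(1-\z^2)^{-1/2}\le C(1-\z)^{-1/2}$ on $(0,1)$ and handling the endpoint singularity directly, but the split at $\z=1/2$ is cleaner and matches the way the lemma will be applied to the heat kernel, where $\z=\tanh t$ small corresponds to small $t$ and $\z$ near $1$ to large $t$.
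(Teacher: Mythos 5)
Your proof is correct and takes essentially the same route as the paper: the same split of the integral at $\zeta=1\slash 2$, with the piece near $\zeta=1$ controlled by the integrability of $(1-\zeta^2)^{-1\slash 2}$ together with a pointwise bound on $\zeta^{-a}\exp(-T\zeta^{-1})$. The only differences are cosmetic: where the paper simply cites Lemma 1.1 of Stempak--Torrea for the integral over $(0,1\slash 2)$, you reprove it via the substitution $u=T\slash\zeta$ and the convergence of $\Gamma(a-1)$, and near $\zeta=1$ you pass through $e^{-T}\lesssim T^{-a+1}$ rather than using $\sup_{u\geq 0}u^{a-1}e^{-u}<\infty$ directly, both of which are valid.
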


\begin {proof}
We split the region of integration onto $(0,1\slash 2)$ and $(1\slash 2,1)$. For $\z \in (0,1\slash 2)$ we have $(1-\z^2)^{-1\slash 2}\simeq 1$, so the bound for the integral over $(0,1\slash 2)$ is a straightforward consequence of \cite[Lemma 1.1]{StTo}. It remains to estimate the integral over $(1\slash 2,1)$. Since $\z \simeq 1$ in this case and $\sup_{u\geq 0}u^{a-1}e^{-u}<\infty$, we see that
\begin{align*}
\z^{-a}\exp(-T\zeta^{-1})
\lesssim
T^{-a+1}(T\z^{-1})^{a-1}\exp(-T\zeta^{-1})
\lesssim
 T^{-a+1},\qquad \z\in(1\slash 2,1),\quad T>0.
\end{align*}
Now the conclusion follows because $\int_{1\slash 2}^{1}(1-\z^2)^{-1\slash 2}\,d\z<\infty$.
\end {proof}

The lemma below establishes an important connection between the estimates emerging from the representation \eqref{hk} and the standard estimates related to the space $(\R,dw_\a^+,|\cdot|)$.

\begin{lem}$($\cite[Lemma 5.3]{NS3}, \cite[Lemma 4]{NS2}$)$ \label{lem4}
Assume that $\alpha \in [-1\slash 2, \infty)^d$ and let $\delta,\kappa \in [0,\infty)^d$ be fixed.
Then for $x,y\in\R$, $x \neq y$,
\begin{equation*}
(x+y)^{2\delta} \int_{[-1,1]^d}
	 \big(q_{+}(x,y,s)\big)^{-d - |\alpha| -|\delta|}\,\Pi_{\alpha+\delta+\kappa}(ds)
\lesssim \frac{1}{w_{\alpha}^+(B(x,|y-x|))}
\end{equation*}
and
\begin{equation*}
 (x+y)^{2\delta}\int_{[-1,1]^d} 
    \big(q_{+}(x,y,s)\big)^{-d - |\alpha| -|\delta| - 1\slash 2}\,\Pi_{\alpha+\delta+\kappa}(ds)
\lesssim \frac{1}{|x-y|} \;
 \frac{1}{w_{\alpha}^+(B(x,|y-x|))}.
\end{equation*}
\end{lem}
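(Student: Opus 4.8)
\textbf{Proof plan for Lemma \ref{lem4}.}
The plan is to reduce both inequalities to elementary estimates on $q_+(x,y,s)$ and on the measure $w_\a^+$ of balls, treating separately the ``non-degenerate'' range where $|x-y|$ is comparable to $|x|+|y|$ and the ``degenerate'' range where $|x-y|$ is much smaller. First I would record the two basic facts one needs about $q_+$. Writing $q_+(x,y,s)=|x-y|^2+2\sum_i x_i y_i(1+s_i)$ and noting that each factor $x_i,y_i>0$ and $1+s_i\ge 0$ on $[-1,1]$, one gets the lower bound $q_+(x,y,s)\ge |x-y|^2$, uniformly in $s$; on the other hand $q_+(x,y,s)\le |x|^2+|y|^2+2|x||y|=(|x|+|y|)^2\lesssim(x+y)\cdot(x+y)$ in the obvious componentwise sense, so that for any exponents one has control of $q_+$ from above by $(|x|+|y|)^2$. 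The second fact I would invoke is the standard volume estimate for the homogeneous space $(\R,dw_\a^+,|\cdot|)$, namely
\begin{equation*}
w_\a^+(B(x,r))\simeq r^d\prod_{i=1}^d\bigl(|x_i|+r\bigr)^{2\a_i+1},\qquad x\in\R,\ r>0,
\end{equation*}
applied with $r=|y-x|$; combined with $|x_i|+|y-x|\simeq x_i+y_i+|x-y|$ this rewrites the target right-hand sides purely in terms of $x+y$ and $|x-y|$.

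Next I would handle the first inequality. Using $q_+\ge|x-y|^2$ on the part of the integrand carrying the ``extra'' negative power beyond what is needed, and keeping just enough powers of $q_+$ to be integrated against $\Pi_{\a+\delta+\kappa}$, the key point is to show
\begin{equation*}
(x+y)^{2\delta}\int_{[-1,1]^d}\bigl(q_+(x,y,s)\bigr)^{-d-|\a|-|\delta|}\,\Pi_{\a+\delta+\kappa}(ds)\lesssim (x+y)^{2\delta}\,|x-y|^{-2d-2|\a|-2|\delta|},
\end{equation*}
after which one must check that the right-hand side is $\lesssim 1/w_\a^+(B(x,|y-x|))$, i.e. that
\begin{equation*}
(x+y)^{2\delta}\lesssim |x-y|^{2d+2|\a|+2|\delta|}\cdot |x-y|^{-d}\prod_i(x_i+y_i+|x-y|)^{-2\a_i-1}\cdot(\text{constants}),
\end{equation*}
which reduces, after cancelling, to the componentwise claim $(x_i+y_i)^{2\delta_i}\lesssim (x_i+y_i+|x-y|)^{2\delta_i+2\a_i+1}|x-y|^{-2\a_i-1}\cdots$. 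In the degenerate regime $|x-y|\lesssim x_i+y_i$ this is where the integral against $\Pi$ cannot simply be bounded by replacing $q_+$ by $|x-y|^2$ everywhere, and one genuinely must exploit that $\Pi_{\a+\delta+\kappa}$ concentrates near $s_i=\pm1$ and that near $s_i=-1$ one has $q_+(x,y,s)\simeq |x-y|^2+x_iy_i(1+s_i)\gtrsim |x-y|^2$ while the density $(1-s_i^2)^{\a_i+\delta_i+\kappa_i-1/2}\simeq(1+s_i)^{\a_i+\delta_i+\kappa_i-1/2}$ supplies the integrability. So in that regime I would split $[-1,1]$ at $s_i=0$ and, near $-1$, substitute $u_i=1+s_i$ and estimate $\int_0^1(|x-y|^2+x_iy_i u_i)^{-a}u_i^{b}\,du_i$ by the standard Beta-type bound; summing the resulting one-dimensional estimates over the $2^d$ sign patterns of the singular corners of the cube gives the claim. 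The non-degenerate regime is immediate from $q_+\simeq|x-y|^2\simeq(|x|+|y|)^2$.

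The second inequality is obtained by exactly the same scheme, with one extra power $q_+^{-1/2}$: one uses $q_+\ge|x-y|^2$ to extract a factor $|x-y|^{-1}$ and then applies the already-proved first inequality (or re-runs the one-dimensional Beta estimate with the exponent shifted by $1/2$) to the remaining integrand $q_+^{-d-|\a|-|\delta|}$. Thus
\begin{equation*}
(x+y)^{2\delta}\int_{[-1,1]^d}q_+^{-d-|\a|-|\delta|-1/2}\,\Pi_{\a+\delta+\kappa}(ds)\le |x-y|^{-1}\,(x+y)^{2\delta}\int_{[-1,1]^d}q_+^{-d-|\a|-|\delta|}\,\Pi_{\a+\delta+\kappa}(ds)\lesssim \frac{1}{|x-y|}\,\frac{1}{w_\a^+(B(x,|y-x|))}.
\end{equation*}
The limiting coordinates with $\a_i=-1/2$ (where $\Pi_{-1/2}$ is a sum of point masses at $\pm1$) must be treated as degenerate instances of the same computation: there the ``integral'' in the $i$th variable is just the two-term evaluation $q_+(x,y,s)\big|_{s_i=1}$ and $q_+\big|_{s_i=-1}$, and one uses $q_+\big|_{s_i=-1}\ge|x-y|^2$ together with $q_+\big|_{s_i=1}=|x-y|^2+4x_iy_i\gtrsim(x_i+y_i)^2$ in the non-degenerate case; no integrability issue arises. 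The main obstacle, and the only step requiring real care, is the bookkeeping in the degenerate regime: correctly splitting the cube near each of its singular faces, performing the $u_i=1\pm s_i$ substitutions, and verifying that the resulting product of one-dimensional Beta bounds reassembles to precisely the volume factor $\prod_i(x_i+y_i+|x-y|)^{-2\a_i-1}|x-y|^{-d}$; once that is set up, the rest is routine. Since the statement is quoted from \cite[Lemma 5.3]{NS3} and \cite[Lemma 4]{NS2}, I would in fact simply cite those references for the detailed verification and only sketch the reduction above.
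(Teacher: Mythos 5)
The paper offers no proof of this lemma at all --- it is quoted directly from \cite[Lemma 5.3]{NS3} and \cite[Lemma 4]{NS2} --- and your proposal ultimately does the same, deferring the detailed verification to those references. Your sketch (the ball-volume formula for $w_\a^+$, the lower bound $q_+\ge|x-y|^2$, the easy non-degenerate case, and iterated one-dimensional Beta-type estimates near the corner $s=(-1,\dots,-1)$ in the degenerate regime, where the prefactor $(x+y)^{2\delta}$ is absorbed against the shifted measure index via $(x_i+y_i)^2(1+s_i)\lesssim q_+$) is consistent with how the cited papers actually prove it, so this is essentially the same approach as the paper's.
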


To state the next lemma and to perform the relevant kernel estimates we will use the same abbreviation as in \cite{Sz1},
$$
\e\b=\exp\Big(-{\frac{1}{4\zeta}}q_{+}(x,y,s)-\frac{\zeta}{4}q_{-}(x,y,s)\Big).
$$
Also, we will often neglect the set of integration $[-1,1]^{d}$ in integrals against $\Pi_{\alpha}$ and will frequently write shortly $q_{+}$ and $q_{-}$ omitting the arguments.

\begin{lem}\label{lem1}
Assume that $\a \in [-1\slash 2,\infty)^{d}$ and $\eps, \xi, \rho \in \Z$ are fixed and such that $\xi \leq \eps$, $\rho \leq \eps$. Given $C>0$ and $u \geq 1$, define the function $p_{u}$ acting on $\R \times \R \times (0,1)$ by
\begin{align*}
p_{u}(x,y,\z)=
\sqrt{1-\z^2}\,\z^
{-d-|\a|-|\eps|+|\xi|\slash 2+|\rho|\slash 2-u\slash 2-1\slash 2}
\,x^{\eps-\xi}y^{\eps-\rho}\int_{[-1,1]^{d}}\big(\ee\b\big)^{C}\,\Pi_{\a+\eps}(ds).
\end{align*}
Then $p_u$ satisfies the integral estimate
\begin{align*}
\|p_{u}(x,y,\z(t))\|_{L^1(dt)}
&\lesssim
\frac{1}{|x-y|^{u-1}} \;
\frac{1}{w^{+}_{\alpha}(B(x,|y-x|))},\qquad x\ne y,
\end{align*}
where $t$ and $\z$ are related as in $\eqref{zz}$.
\end{lem}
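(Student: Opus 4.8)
The plan is to change variables from $t$ to $\zeta$ via $\zeta=\tanh t$, carry out the $s$-integration using Lemma \ref{lem4}, and then handle the remaining one-dimensional integral in $\zeta$ by Lemma \ref{mod}. First I would note that under $\zeta=\tanh t$ one has $dt = (1-\zeta^2)^{-1}\,d\zeta$, so that
\begin{align*}
\|p_u(x,y,\zeta(t))\|_{L^1(dt)}
=
\int_0^1 (1-\zeta^2)^{-1\slash 2}\,\zeta^{-d-|\a|-|\eps|+|\xi|\slash 2+|\rho|\slash 2-u\slash 2-1\slash 2}\,x^{\eps-\xi}y^{\eps-\rho}
\int \big(\e\b\big)^C\,\Pi_{\a+\eps}(ds)\,d\zeta.
\end{align*}
The factor $\big(\e\b\big)^C = \exp\!\big(-\frac{C}{4\zeta}q_+ - \frac{C\zeta}{4}q_-\big)$ contains two Gaussian-type factors; the role of the $q_-$ factor is merely to be bounded by $1$, or, when needed, to absorb the prefactors $x^{\eps-\xi}y^{\eps-\rho}$ — indeed $x^{\eps-\xi}y^{\eps-\rho} \le (x+y)^{\eps-\xi}(x+y)^{\eps-\rho}$ and since $q_-(x,y,s)\ge c|x-y|^2$ (for $s$ close to the diagonal it is comparable to $|x|^2+|y|^2$, in the worst case; more precisely $q_-\gtrsim |x-y|^2$ uniformly), one can trade powers of $(x+y)$ against $\zeta^{-j}$ and decay $\exp(-c'\zeta |x-y|^2)$, but the cleaner route is to bound $(\e\b)^C\le (\e\b)^{C'}\cdot\exp(-\frac{C-C'}{4\zeta}q_+)$ for some $0<C'<C$ and use the surplus $q_+$-exponential.

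The core step is to estimate, for fixed $\zeta$, the $s$-integral. I would use the elementary bound $\exp(-\tfrac{a}{4\zeta}q_+)\lesssim_N (q_+/\zeta)^{-N}$, valid for any $N\ge 0$, applied with $N = d+|\a|+|\delta|$ (plus possibly $1\slash 2$), choosing the multi-index $\delta\in[0,\infty)^d$ so that $\a+\delta$ matches the parameters in Lemma \ref{lem4}; concretely, taking $\delta$ related to $\eps-\xi$ and $\eps-\rho$ one writes $x^{\eps-\xi}y^{\eps-\rho}\le (x+y)^{2\delta}$ with $2\delta = (\eps-\xi)+(\eps-\rho)$, so that $|\delta| = |\eps| - |\xi|\slash 2 - |\rho|\slash 2$. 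Then Lemma \ref{lem4} (first inequality) gives
\begin{align*}
(x+y)^{2\delta}\int \big(q_+\big)^{-d-|\a|-|\delta|}\,\Pi_{\a+\delta+\kappa}(ds)
\lesssim
\frac{1}{w_\a^+(B(x,|y-x|))},
\end{align*}
provided $\kappa := \eps - \delta = \eps - \tfrac12(\eps-\xi) - \tfrac12(\eps-\rho) = \tfrac12(\xi+\rho)\ge 0$ is a valid shift — here one must check $\a+\delta+\kappa = \a+\eps$, which is exactly what appears in $\Pi_{\a+\eps}$, and that $\delta,\kappa\in[0,\infty)^d$, which follows from $\xi\le\eps$, $\rho\le\eps$. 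After pulling out the resulting $\zeta$-power $\zeta^{d+|\a|+|\delta|}$ from the estimate $q_+^{-N}\le (q_+/\zeta)^{-N}\cdot\zeta^{-N}$... wait — more carefully, $\exp(-\tfrac{a}{4\zeta}q_+)\le (q_+/\zeta)^{-N}\cdot c_N = c_N\,\zeta^N q_+^{-N}$, so the $s$-integral is bounded by $c_N\,\zeta^N$ times $\int q_+^{-N}\Pi_{\a+\eps}(ds)$, and the factor $\zeta^N = \zeta^{d+|\a|+|\delta|}$ combines with the explicit power of $\zeta$ in $p_u$.

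Collecting the exponents of $\zeta$: the explicit power is $-d-|\a|-|\eps|+|\xi|\slash 2+|\rho|\slash 2-u\slash 2-1\slash 2 = -d-|\a|-|\delta|-u\slash 2-1\slash 2$ (using $|\delta| = |\eps|-|\xi|\slash 2-|\rho|\slash 2$), and multiplying by $\zeta^{d+|\a|+|\delta|}$ leaves $\zeta^{-u\slash 2-1\slash 2}$. Hmm, that is not quite the shape Lemma \ref{mod} wants. Let me instead keep one extra half-power of $q_+$: apply Lemma \ref{lem4} (second inequality) with $N = d+|\a|+|\delta|+1\slash 2$, producing the factor $\frac{1}{|x-y|}\frac{1}{w_\a^+(B(x,|y-x|))}$ and pulling out $\zeta^{d+|\a|+|\delta|+1\slash 2}$; then the $\zeta$-exponent becomes $-u\slash 2$, still not matched to the integer-power requirement $a>1$ of Lemma \ref{mod}. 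The resolution is that for $u\ge 1$ one has $u\slash 2 + 1\slash 2 \le u$, and on $(0,1)$ smaller negative powers of $\zeta$ are dominated by larger ones only near $0$ — so instead one should split the remaining exponential budget: use $(\e\b)^C \le (\e\b)^{C\slash 2}\exp(-\tfrac{C}{8\zeta}q_+)$, spend the first half on Lemma \ref{lem4} as above to produce $\zeta^{d+|\a|+|\delta|}$ and the kernel bound, and retain $\exp(-\tfrac{C}{8\zeta}q_+)\ge \exp(-\tfrac{C}{8\zeta}c|x-y|^2) = \exp(-T\zeta^{-1})$ with $T\simeq |x-y|^2$. Then
\begin{align*}
\|p_u(x,y,\zeta(t))\|_{L^1(dt)}
\lesssim
\frac{1}{w_\a^+(B(x,|y-x|))}\int_0^1 (1-\zeta^2)^{-1\slash 2}\,\zeta^{-u\slash 2-1\slash 2}\exp(-T\zeta^{-1})\,d\zeta,
\end{align*}
wait, the exponent $u\slash 2 + 1\slash 2$ exceeds $1$ only for $u>1$. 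For $u=1$ one gets exponent $1$ exactly, where Lemma \ref{mod} fails at the endpoint; there one instead uses the surplus $q_+$-exponential with a genuine power $q_+^{\eps_0}$ for tiny $\eps_0>0$ absorbed into Lemma \ref{lem4}'s $N$, raising the $\zeta$-exponent past $1$ at the cost of an extra $|x-y|^{-\eps_0}$, and then... actually the statement for $u=1$ reads $|x-y|^{u-1}=1$, so one needs only $\int_0^1(1-\zeta^2)^{-1\slash 2}\zeta^{-1}\exp(-T\zeta^{-1})\,d\zeta\lesssim 1$ uniformly, which holds since $\sup_{v\ge0}v\,e^{-v/2}<\infty$ gives $\zeta^{-1}\exp(-T\zeta^{-1})\le 2T^{-1}\cdot\big(\tfrac{T}{2\zeta}e^{-T/(2\zeta)}\big)\exp(-T/(2\zeta))\lesssim$ — one keeps a bit of decay and integrates. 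In general, Lemma \ref{mod} with $a = u\slash 2 + 1\slash 2$ (valid when $u>1$) yields $T^{-a+1} = T^{-u\slash 2+1\slash 2}\simeq |x-y|^{-u+1}$, which is exactly the claimed bound; the boundary case $u=1$ and the half-integer bookkeeping are handled as just indicated by splitting off an arbitrarily small extra power of $q_+$.

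\textbf{Main obstacle.} The routine part is the change of variables and invoking Lemmas \ref{mod} and \ref{lem4}. The delicate bookkeeping — and the step I expect to be the real obstacle — is matching the exponent of $\zeta$ so that after using Lemma \ref{lem4} one lands precisely on a power $a>1$ for Lemma \ref{mod}, together with verifying that the shift parameters $\delta = \tfrac12((\eps-\xi)+(\eps-\rho))$ and $\kappa = \tfrac12(\xi+\rho)$ are honestly nonnegative multi-indices with $\a+\delta+\kappa=\a+\eps$, which is where the hypotheses $\xi\le\eps$, $\rho\le\eps$ (and the requirement $u\ge1$) are consumed. Care is also needed when some $\a_i = -1\slash 2$, since then $\Pi_{\a_i+\eps_i}$ may be the atomic measure $(\delta_{-1}+\delta_1)\slash\sqrt{2\pi}$ rather than absolutely continuous; Lemma \ref{lem4} is stated to cover this, so one simply applies it as a black box, but one should make sure the pointwise bound $\exp(-\tfrac{a}{4\zeta}q_+)\lesssim_N (\zeta/q_+)^N$ used before integrating in $s$ is uniform in $s\in[-1,1]^d$, which it is since $q_+\ge 0$.
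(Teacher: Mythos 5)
Your proposal uses the same ingredients as the paper (the change of variable \eqref{zz}, Lemma \ref{mod}, Lemma \ref{lem4}, the bound $x^{\eps-\xi}y^{\eps-\rho}\le(x+y)^{2\eps-\xi-\rho}$ and $q_+\ge|x-y|^2$, with the correct identification $\delta=\eps-\xi\slash2-\rho\slash2$, $\kappa=\xi\slash2+\rho\slash2$), but in the opposite order: you first convert the exponential pointwise into $\zeta^{N}q_+^{-N}$, do the $s$-integration via Lemma \ref{lem4}, and only then integrate in $\zeta$. This decoupling creates a genuine gap at $u=1$, which is precisely the case needed for the growth estimates in Theorem \ref{kest} and Proposition \ref{assoc}. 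After your reduction the leftover one-dimensional integral is $\int_0^1(1-\z^2)^{-1\slash2}\z^{-1}\exp(-T\z^{-1})\,d\z$ with $T\simeq|x-y|^2$, and this is \emph{not} uniformly bounded: substituting $v=T\slash\z$ it equals (up to the harmless factor on $(1\slash2,1)$) $\int_{cT}^{\infty}v^{-1}e^{-v}\,dv\simeq\log(1\slash T)$ as $T\to0$, so near the diagonal you only get the target bound times $\log(e+|x-y|^{-1})$. Your first patch asserts exactly the false uniform bound (the inequality $\z^{-1}e^{-T\slash\z}\lesssim T^{-1}e^{-T\slash(2\z)}$ only gives $\lesssim T^{-1}$ after integration, and the sentence trails off without a valid conclusion). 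Your second patch, spending an extra $q_+^{\eps_0}$, costs a factor $|x-y|^{-2\eps_0}$, which is inadmissible since for $u=1$ the claimed bound contains no negative power of $|x-y|$; moreover Lemma \ref{lem4} as stated does not accommodate the intermediate exponent $-d-|\a|-|\delta|-\eps_0$ anyway.

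The paper avoids this entirely by performing the $\z$-integration first (Fubini--Tonelli), against the \emph{full} power of $\z$: Lemma \ref{mod} is applied with $a=d+|\a|+|\eps|-|\xi|\slash2-|\rho|\slash2+u\slash2+1\slash2$, and since $d+|\a|\ge d\slash2\ge1\slash2$ and $|\eps|-|\xi|\slash2-|\rho|\slash2\ge0$, one has $a\ge3\slash2>1$ for every $u\ge1$ — there is no endpoint case. This yields $(q_+)^{-d-|\a|-|\delta|-(u-1)\slash2}$; then $(q_+)^{-(u-1)\slash2}\le|x-y|^{-(u-1)}$ peels off the extra factor and Lemma \ref{lem4} (first inequality, with your $\delta,\kappa$) finishes the proof. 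In other words, the "$\z^{d+|\a|+|\delta|}$" you manufacture by the lossy pointwise bound $e^{-cq_+\slash\z}\lesssim(\z\slash q_+)^N$ is produced losslessly by the $\z$-integral itself; your argument is fine for $u>1$ but the $u=1$ case requires this reordering. (Minor slips: the inequality $\exp(-\tfrac{C}{8\z}q_+)\ge\exp(-\tfrac{C}{8\z}c|x-y|^2)$ should read $\le$, and the digression about $q_-$ plays no role.)
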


\begin {proof}
Changing the variable as in \eqref{zz} and then using sequently the Fubini-Tonelli theorem, Lemma \ref{mod} (specified to $a=d+|\a|+|\eps|-|\xi|\slash 2-|\rho|\slash 2+u\slash 2+1\slash 2$) and the inequality $|x-y|^2\leq q_{+}$, we obtain
\begin{align*}
\|p_{u}&(x,y,\z(t))\|_{L^1(dt)}\\
=&\,
x^{\eps-\xi}y^{\eps-\rho}
\int\int_0^1(1-\z^2)^{-1\slash 2}
\z^{-d-|\a|-|\eps|+|\xi|\slash 2+|\rho|\slash 2-u\slash 2-1\slash 2}
\big(\e\b\big)^{C}\,d\z\,\Pi_{\a+\eps}(ds)\\
\lesssim&\,
x^{\eps-\xi}y^{\eps-\rho}
\int(q_+)^{-d-|\a|-|\eps|+|\xi|\slash 2+|\rho|\slash 2-u\slash 2+1\slash 2}
\,\Pi_{\a+\eps}(ds)\\
\leq&\,
(x+y)^{2\eps-\xi-\rho}
\frac{1}{|x-y|^{u-1}}
\int(q_+)^{-d-|\a|-|\eps-\xi\slash 2-\rho\slash 2|}\,\Pi_{\a+\eps}(ds).
\end{align*}
This, in view of Lemma \ref{lem4} (taken with $\delta=\eps-\xi\slash 2-\rho\slash 2$ and $\kappa=\xi\slash 2+\rho\slash 2$), gives the desired conclusion.
\end {proof}

\begin{lem}$($\cite[Lemma 4.2]{Sz}$)$\label{oq}
Given $b\geq 0$ and $c>0$, we have
$$
\big(q_{\pm}(x,y,s)\big)^{b}\exp\big(-cAq_{\pm}(x,y,s)\big)\lesssim A^{-b}\exp\Big(\frac{-cA}{2}q_{\pm}(x,y,s)\Big),\qquad A>0,
$$
uniformly in $q_{\pm}$.
\end{lem}


\begin {proof}[Proof of Theorem \ref{kest} (a)]
The growth condition is a consequence of the estimate
\begin{align*}
\big |\partial_{t}G_{t}^{\a,\eps}(x,y) \big|
\lesssim
\sqrt{1-\z^2}\,\z^
{-d-|\a|-|\eps|-1}\,(xy)^{\eps}
\int\big(\e\b\big)^{1\slash 2}\,\Pi_{\a+\eps}(ds),
\end{align*}
which is stated explicitly in \cite[(4.6)]{Sz1}, the fact that $\eta$ is bounded and Lemma \ref{lem1} (applied with $u=1$, $\xi=\rho=0$).

To prove the gradient condition, by symmetry reasons, it suffices to show that
\begin{align*}
\big| \partial_{x_j}K^{\a,\eps}(x,y) \big|
\lesssim
\frac{1}{|x-y|} \;
\frac{1}{w_{\a}^+(B(x,|y-x|))},\qquad x\ne y,\quad j=1,\ldots,d.
\end{align*}
Differentiating \eqref{kera} in $x_j$ (passing with $\partial_{x_{j}}$ under the integral sign can be easily justified, see \cite[Section 4.1]{Sz}) we get
\begin{align*}
\partial_{x_j}K^{\a,\eps}(x,y)
=
-\int_0^\infty \partial_{x_j}\partial_t G_t^{\a,\eps}(x,y) \, \eta(t)\,dt,\qquad x\ne y.
\end{align*}
Applying now the first inequality in \cite[(4.7)]{Sz1},
\begin{align*}
\big| \partial_{x_j}\partial_{t}G_{t}^{\a,\eps}(x,y) \big|
\lesssim&
\sqrt{1-\z^2}\,\z^{-d-|\a|-|\eps|-3\slash 2}\,(xy)^{\eps}
\int\big(\e\b\big)^{1\slash 4}\,\Pi_{\a+\eps}(ds)\\
&+
\chi_{\{\eps_{j}=1\}}
\sqrt{1-\z^2}\,\z^{-d-|\a|-|\eps|-1}\,x^{\eps-e_j}y^{\eps}
\int\big(\e\b\big)^{1\slash 2}\,\Pi_{\a+\eps}(ds),
\end{align*}
the fact that $\eta$ is bounded and Lemma \ref{lem1} twice (specified to $u=2$ and either $\xi=\rho=0$ or $\xi=e_j$, $\rho=0$) leads directly to the required bound.

The proof of (a) in Theorem \ref{kest} is finished.
\end {proof}


\begin {proof}[Proof of Theorem \ref{kest} (b)]
We first verify the growth condition. Using Lemma \ref{oq} (taken with $b=d+|\a|+|\eps|$, $c=1\slash 4$, $A=\z^{-1}$) we get
\begin{align*}
e^{t(2d+2|\a|)}G_t^{\a,\eps}(x,y)
\leq&
\Big(\frac{1+\z}{1-\z}\Big)^{d+|\a|}
\Big(\frac{1-\z^2}{2\z}\Big)^{d+|\a|+|\eps|}
(xy)^{\eps}
\int\e\b\,\Pi_{\a+\eps}(ds)\\
\lesssim&\,
\z^{-d-|\a|-|\eps|}\,(xy)^{\eps}\int\e\b\,\Pi_{\a+\eps}(ds)\\
\lesssim&
(x+y)^{2\eps}
\int(q_+)^{-d-|\a|-|\eps|}\,\Pi_{\a+\eps}(ds).
\end{align*}
Now the conclusion follows with the aid of Lemma \ref{lem4} (specified to $\delta=\eps$ and $\kappa=0$) and the assumption \eqref{meas} concerning the measure $\mu$.

Next, our task is to show the gradient condition. By symmetry reasons, it suffices to prove that
\begin{align*}
\big| \partial_{x_j}\mathcal{K}^{\a,\eps}(x,y) \big|
\lesssim
\frac{1}{|x-y|} \;
\frac{1}{w_{\a}^+(B(x,|y-x|))},\qquad x\ne y,\quad j=1,\ldots,d.
\end{align*}
Differentiating \eqref{kerb} in $x_j$ (exchanging $\partial_{x_{j}}$ with the integral sign is legitimate, which can be justified by a slightly modified version of \cite[Lemma 4.7]{Sz1}, see for instance \cite[Section 4.3]{Sz}) we get
\begin{align*}
\partial_{x_j}\mathcal{K}^{\a,\eps}(x,y)
=
\int_0^\infty \partial_{x_j} G_t^{\a,\eps}(x,y)\,d\mu(t),\qquad x\ne y.
\end{align*}
Using the estimate
\begin{align*}
\big| \partial_{x_j}G_t^{\a,\eps}(x,y) \big|
\lesssim&
\Big(\frac{1-\z^2}{2\z}\Big)^{d+|\a|+|\eps|}
\z^{-1\slash 2}\,(xy)^{\eps}
\int\big(\e\b\big)^{1\slash 2}\,\Pi_{\a+\eps}(ds)\\
&+
\chi_{\{\eps_{j}=1\}}
\Big(\frac{1-\z^2}{2\z}\Big)^{d+|\a|+|\eps|}
\,x^{\eps-e_j}y^{\eps}
\int\e\b\,\Pi_{\a+\eps}(ds),
\end{align*}
which is implicitly contained in \cite[Section 4.2]{Sz1}, and then proceeding in a similar way as in the case of the growth condition, applying this time Lemma \ref{oq} twice (with $b=d+|\a|+|\eps|+1\slash 2$, $c=1\slash 8$, $A=\z^{-1}$ and $b=d+|\a|+|\eps|$, $c=1\slash 4$, $A=\z^{-1}$), we obtain
\begin{align*}
e^{t(2d+2|\a|)} \big|\partial_{x_{j}}G_t^{\a,\eps}(x,y)\big|
\lesssim&
(x+y)^{2\eps}
\int(q_+)^{-d-|\a|-|\eps|-1\slash 2}\,\Pi_{\a+\eps}(ds)\\
&+
\chi_{\{\eps_{j}=1\}}
(x+y)^{2\eps-e_j}
\int(q_+)^{-d-|\a|-|\eps-e_{j}\slash 2|-1\slash 2}\,\Pi_{\a+\eps}(ds).
\end{align*}
Taking into account \eqref{meas}, this estimate together with Lemma \ref{lem4} (specified to $\delta=\eps$, $\kappa=0$ and $\delta=\eps-e_{j}\slash 2$, $\kappa=e_{j}\slash 2$) leads directly to the gradient condition for $\mathcal{K}^{\a,\eps}(x,y)$. This completes proving (b) in Theorem \ref{kest}.
\end {proof}


\end{document}